\documentclass{amsart}
\usepackage{esint}
\usepackage{amsfonts}
\usepackage{amssymb}
\usepackage{xcolor}
\usepackage{url, hyperref}

\setcounter{MaxMatrixCols}{10} 

\newtheorem{theorem}{Theorem}[section]

\newtheorem{corollary}[theorem]{Corollary}

\newtheorem{lemma}[theorem]{Lemma}

\numberwithin{equation}{section}

\begin{document}
\title[Comparison theorems for three-manifolds]{Comparison theorems for
three-dimensional manifolds with scalar curvature bound}
\author{Ovidiu Munteanu and Jiaping Wang}

\begin{abstract}
Two sharp comparison results are derived for three-dimensional complete
noncompact manifolds with scalar curvature bounded from below. The first one
concerns the Green's function. When the scalar curvature is nonnegative, it
states that the rate of decay of an energy quantity over the level set is
strictly less than that of the Euclidean space unless the manifold itself is
isometric to the Euclidean space. The result is in turn converted into a
sharp area comparison for the level set of the Green's function when in
addition the Ricci curvature of the manifold is assumed to be asymptotically
nonnegative at infinity. The second result provides a sharp upper bound of
the bottom spectrum in terms of the scalar curvature lower bound, in
contrast to the classical result of Cheng which involves a Ricci curvature
lower bound.
\end{abstract}

\address{Department of Mathematics, University of Connecticut, Storrs, CT
06268, USA}
\email{ovidiu.munteanu@uconn.edu}
\address{School of Mathematics, University of Minnesota, Minneapolis, MN
55455, USA}
\email{jiaping@math.umn.edu}
\maketitle

\section{Introduction}

The classical Laplacian comparison theorem \cite[Chapter I]{SY94} states
that the Laplacian of a geodesic distance function is at most that of the
corresponding space form under a Ricci curvature lower bound. It is obvious
that such a result is no longer true under a scalar curvature lower bound.
The purpose of this note is to search for suitable alternatives for complete
three-dimensional manifolds.

Our first result deals with the case of nonnegative scalar curvature. Recall
that a complete manifold is called nonparabolic if it admits a positive
Green's function \cite[Chapter 20]{L}. It is well-known that in this case
the minimal positive Green's function $G(x,y)$ may be obtained as the limit
of the Dirichlet Green's function of a sequence of compact exhaustive
domains of the manifold. Then

\begin{equation*}
\Delta _{x}G\left( x,y\right) =-\delta \left( x,y\right),
\end{equation*}%
\begin{equation*}
G\left( x,y\right) =G\left( y,x\right) >0
\end{equation*}
and 
\begin{equation*}
\liminf_{y\to \infty} G(x,y)=0.
\end{equation*}
Throughout the paper we fix $p\in M$ and let

\begin{equation*}
G\left( x\right) =G\left( p,x\right).
\end{equation*}%
We also use the following notations to denote the level and sublevel sets of 
$G(x).$ 
\begin{eqnarray*}
L\left( a,b\right) &=&\left\{ x\in M:\ a<G\left( x\right) <b\right\} \\
l\left( t\right) &=&\left\{ x\in M:\ G\left( x\right) =t\right\}.
\end{eqnarray*}

We have the following sharp comparison theorem concerning the minimal
positive Green's function.

\begin{theorem}
\label{T3} Let $\left( M,g\right) $ be a complete noncompact
three-dimensional manifold with nonnegative scalar curvature. Assume that $M$
has one end and its first Betti number $b_{1}\left( M\right) =0.$ If $M$ is
nonparabolic and the minimal positive Green's function $G\left( x\right)
=G\left( p,x\right) $ satisfies $\lim_{x\rightarrow \infty }G(x)=0,$ then

\begin{equation*}
\frac{d}{dt}\left( \frac{1}{t}\int_{l\left( t\right) }\left\vert \nabla
G\right\vert ^{2}-4\pi t\right) \leq 0
\end{equation*}%
for all $t>0.$ Moreover, equality holds for some $T>0$ if and only if the
super level set $\left\{x\in M, G(x)>T\right\}$ is isometric to a ball in
the Euclidean space $\mathbb{R}^3.$
\end{theorem}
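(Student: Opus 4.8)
The plan is to study the single function $A(t)=\int_{l(t)}|\nabla G|^2$ and to reduce the stated monotonicity to a sharp differential inequality for $A$. Writing $u=G$, which is harmonic away from $p$, and letting $\nu=\nabla u/|\nabla u|$ be the unit normal pointing toward increasing $u$, I would first record the two elementary facts coming from the divergence theorem and the first variation of area along the foliation $\{l(t)\}$: the flux identity $\int_{l(t)}|\nabla u|=1$, and
\[
A'(t)=-\int_{l(t)}|\nabla u|\,H\,dA,\qquad H=\mathrm{div}\,\nu .
\]
In the model $\mathbb{R}^3$ one has $A(t)=4\pi t^2$, so $A(t)/t-4\pi t$ vanishes identically; the theorem asserts it is nonincreasing in general. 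Since $\frac{d}{dt}\big(A/t-4\pi t\big)=t^{-2}\big(tA'-A-4\pi t^2\big)$, it suffices to show $Q(t):=tA'(t)-A(t)-4\pi t^2\le 0$, and because $Q'(t)=t\big(A''(t)-8\pi\big)$ with $Q(t)\to 0$ as $t\to\infty$ (the level sets shrink to the smooth point $p$, where $M$ is Euclidean to leading order), the whole statement reduces to the sharp inequality $A''(t)\ge 8\pi$ for every regular value $t$.

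Next I would compute $A''$ by differentiating $A'$ with the first-variation formula and inserting the Bochner identity for $u$. Decomposing the Hessian of $u$ in a frame adapted to $l(t)$ and using $\Delta u=0$ gives the pointwise relation $\Delta|\nabla u|/|\nabla u|=|\nabla_{l(t)}|\nabla u||^2/|\nabla u|^2+|\mathrm{II}|^2+\mathrm{Ric}(\nu,\nu)$, and after integrating by parts on the closed surface $l(t)$ one obtains
\[
A''(t)=\int_{l(t)}\Big(|\mathrm{II}|^2+\mathrm{Ric}(\nu,\nu)+\frac{|\nabla_{l(t)}|\nabla u||^2}{|\nabla u|^2}\Big)\,dA .
\]
I would then eliminate the indefinite term $\mathrm{Ric}(\nu,\nu)$ using the Gauss equation $2K_{l(t)}=R-2\mathrm{Ric}(\nu,\nu)+H^2-|\mathrm{II}|^2$ together with the algebraic identity $|\mathrm{II}|^2+\det\mathrm{II}=\tfrac34H^2+\tfrac14(\kappa_1-\kappa_2)^2$. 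Here the topology enters: since $M$ has one end, is nonparabolic, and $G\to 0$ at infinity, the superlevel sets $\{G>t\}$ are connected and contain $p$, and combined with $b_1(M)=0$ this forces each regular $l(t)$ to be a single $2$-sphere, so Gauss--Bonnet yields $\int_{l(t)}K_{l(t)}=4\pi$. The result of these substitutions is
\[
A''(t)=\int_{l(t)}\Big(\tfrac34H^2+\tfrac14(\kappa_1-\kappa_2)^2+\tfrac12R+\frac{|\nabla_{l(t)}|\nabla u||^2}{|\nabla u|^2}\Big)\,dA-4\pi .
\]

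Establishing $A''(t)\ge 8\pi$ from this, i.e.\ that the displayed integral is at least $12\pi$, is the main obstacle. The nonnegative terms $\tfrac14(\kappa_1-\kappa_2)^2$, $\tfrac12R\ge 0$ and $|\nabla_{l(t)}|\nabla u||^2/|\nabla u|^2$ are precisely the quantities that vanish in the Euclidean model, where $\tfrac34\int H^2=12\pi$ is the Willmore equality. The difficulty is that the bound $\tfrac34\int_{l(t)}H^2\ge 12\pi$ is \emph{not} available for a surface in a general $M$, and the deficit must be absorbed by the scalar-curvature term; since $\mathrm{Ric}(\nu,\nu)$ is not controlled on a single slice, I expect this Willmore-type estimate — made to work by feeding $R\ge 0$ into Gauss--Bonnet in just the right combination (and possibly a Codazzi/Bochner relation on $l(t)$ for the harmonic $u$) — to be the technical heart of the argument.

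Finally, granting $A''\ge 8\pi$, integrating $Q'(t)=t(A''-8\pi)\ge 0$ from $t$ to $\infty$ against $Q(\infty)=0$ gives $Q\le 0$ and hence the monotonicity. For rigidity, equality at some $T$ forces $Q(T)=0$; as $Q$ is nondecreasing and vanishes at infinity, $Q\equiv 0$ on $[T,\infty)$, so $A''\equiv 8\pi$ there. This forces every nonnegative term above to vanish on each $l(t)$, $t>T$: the level sets are totally umbilic ($\kappa_1=\kappa_2$), $|\nabla G|$ is constant on each of them, and $R\equiv 0$. Integrating these conditions along the normal flow shows the metric on $\{G>T\}$ is rotationally symmetric and flat, i.e.\ $\{G>T\}$ is isometric to a Euclidean ball; conversely that region plainly realizes equality.
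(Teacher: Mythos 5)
Your setup and your formula for $A''$ are both consistent with the paper: at regular values one has $A''(t)=\int_{l(t)}|\nabla G|^{-1}\Delta|\nabla G|$, and after inserting the Bochner formula, the Gauss equation and Gauss--Bonnet this is exactly the paper's identity $A''(t)=\frac12\int_{l(t)}\left(|\nabla^2G|^2|\nabla G|^{-2}+S-S_{l(t)}\right)$, which unwinds to your display. The problem is the step you yourself flag as the ``technical heart'': the slice-wise inequality $A''(t)\geq 8\pi$, i.e.\ $\int_{l(t)}\bigl(\tfrac34H^2+\tfrac14(\kappa_1-\kappa_2)^2+\tfrac12S+|\nabla_{l(t)}|\nabla G||^2|\nabla G|^{-2}\bigr)\geq 12\pi$. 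This is a Willmore-type lower bound for an individual level set; it is not established in your proposal, and it is not what the paper proves. Under only $S\geq 0$ there is no mechanism forcing such a bound on a fixed slice (estimates of this kind are known under nonnegative \emph{Ricci} curvature, cf.\ \cite{AFM}, and the hope that the term $\tfrac12 S\geq 0$ ``absorbs the deficit'' cannot work, since a nonnegative term can only be discarded, not used to compensate). So as written the proof has a genuine gap at its central step.

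The paper closes this gap by proving a weaker but \emph{self-referential} differential inequality instead of a universal one. Keeping the full gradient term via the refined Kato inequality $|\nabla^2G|^2\geq\tfrac32|\nabla|\nabla G||^2$ and applying Cauchy--Schwarz with weight $|\nabla G|^2$ to $A'(t)=\int_{l(t)}\langle\nabla|\nabla G|,\nabla G\rangle|\nabla G|^{-1}$ gives
\begin{equation*}
A''(t)\;\geq\;\frac34\,\frac{(A')^2}{A}(t)\;-\;4\pi ,
\end{equation*}
and the elementary inequality $0\leq A\bigl(\tfrac{A'}{A}-\tfrac{2}{t}\bigr)^2$, i.e.\ $\tfrac{(A')^2}{A}\geq\tfrac4tA'-\tfrac4{t^2}A$, converts this into $Q'(t)\geq\tfrac3tQ(t)$ for your $Q=tA'-A-4\pi t^2$; the integrating factor $t^{-3}$ together with the Euclidean asymptotics at the pole then yields $t^{-3}Q\leq 0$. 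This completing-the-square (Gronwall) step is the idea missing from your outline, and it is precisely what lets the argument avoid any Willmore bound on a single slice; it also produces the equality analysis you describe, since rigidity forces equality in Kato, in Cauchy--Schwarz, and in $\tfrac{A'}{A}=\tfrac2t$, which pins down the Hessian of $G$ explicitly. Two smaller points: the paper runs the computation through Green's identity on $L(t,\infty)$ rather than differentiating $A$ twice, which sidesteps regularity of $A$ at critical values; and your boundary condition $Q(T)\to0$ as $T\to\infty$ is more delicate than you indicate --- leading-order matching at the pole only gives $Q(T)=o(T^2)$, whereas with the $t^{-3}$ weight one only needs $T^{-3}Q(T)\to0$, which is immediate from $G\simeq\frac1{4\pi r}$.
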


Some remarks are in order. First, the conclusion may be restated as

\begin{equation*}
\frac{d}{dt}\left( \frac{1}{t}\int_{l\left( t\right) }\left\vert \nabla
G\right\vert ^{2} \right)\leq \frac{d}{dt}\left( \frac{1}{t}\int_{\bar{l}%
\left( t\right) }\left\vert \nabla \bar{G}\right\vert ^{2} \right)
\end{equation*}%
for all $t>0,$ where $\bar{G}(\bar{x})=\frac{1}{4\pi |\bar x|}$ is the
Green's function of $\mathbb{R}^3$ and $\bar{l}(t)$ the level set of $\bar{G}%
.$ As such, it may be viewed as a comparison of the decay rate concerning
the energy quantity $\frac{1}{t}\int_{l\left( t\right) }\left\vert \nabla
G\right\vert ^{2}$ of $M$ with that of $\mathbb{R}^3.$ Second, the fact that 
$\lim_{x\to \infty} G(x)=0$ together with the topological information of $M$
is to ensure that the level set $l(t)$ is compact and connected. Without
those assumptions, one may work with the Dirichlet Green's function of an
arbitrary bounded domain instead. The resulting conclusion now depends on
the number of components of $l(t)$ as well. Third, it is unclear to us if an
analogous conclusion holds in the higher dimension, though our proof is very
dimension specific.

Under the additional assumption that the Ricci curvature is nonnegative at
infinity, the preceding result may be converted into an area comparison
theorem for the level sets of the Green's function.

\begin{corollary}
\label{C1} Let $\left( M,g\right) $ be a complete noncompact
three-dimensional manifold with nonnegative scalar curvature and
asymptotically nonnegative Ricci curvature, that is, 
\begin{equation*}
\liminf_{x\rightarrow \infty }\mathrm{Ric}\left( x\right) \geq 0.
\end{equation*}%
Assume that $M$ has one end and its first Betti number $b_{1}\left( M\right)
=0.$ If $M$ is nonparabolic and the minimal Green's function $G\left(
x\right) =G\left( p,x\right) $ satisfies $\lim_{x\rightarrow \infty }G(x)=0,$
then

\begin{equation*}
\int_{l\left( t\right) }\left\vert \nabla G\right\vert ^{2}\leq 4\pi t^{2}
\end{equation*}%
and 
\begin{equation*}
\mathrm{Area}\left( l\left( t\right) \right) \geq \frac{1}{4\pi t^{2}}
\end{equation*}%
for all $t>0.$ Moreover, if equality holds for some $T>0,$ then the super
level set $\left\{ G>T\right\} $ is isometric to a ball in $\mathbb{R}^3.$
\end{corollary}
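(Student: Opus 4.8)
The plan is to read the two inequalities off the monotone quantity supplied by Theorem~\ref{T3}, using the asymptotic Ricci hypothesis only to pin down the sign at infinity, and to deduce the area estimate from the energy estimate by a Cauchy--Schwarz argument.

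First I would rewrite the target energy bound. Setting
\[
\Phi(t)=\frac{1}{t}\int_{l(t)}|\nabla G|^2-4\pi t,
\]
the inequality $\int_{l(t)}|\nabla G|^2\le 4\pi t^2$ is exactly $\Phi(t)\le 0$, and Theorem~\ref{T3} tells us that $\Phi$ is non-increasing on $(0,\infty)$. A non-increasing function attains its supremum as $t\to 0^+$, with $\sup_{t>0}\Phi(t)=\lim_{t\to 0^+}\Phi(t)$; hence it suffices to prove $\limsup_{t\to 0^+}\Phi(t)\le 0$, and the monotonicity then forces $\Phi\le 0$ for every $t>0$. This is the right end to work at, since small $t$ corresponds to level sets $l(t)$ escaping to infinity, exactly where the hypothesis $\liminf_{x\to\infty}\mathrm{Ric}\ge 0$ is available.

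The core step is the asymptotic estimate $\int_{l(t)}|\nabla G|^2=O(t^2)$ as $t\to 0^+$. Near infinity the Ricci curvature is almost nonnegative, so I would combine a Cheng--Yau type gradient estimate $|\nabla G|\le C\,G/r$ (with $r$ the distance to $p$) with the lower bound $G\ge c/r$ coming from the volume growth; on $l(t)$ these give $r\ge c/t$ and hence $|\nabla G|\le Ct^2$ pointwise. Using the flux identity $\int_{l(t)}|\nabla G|=1$ --- valid for every $t$ because $\Delta G=-\delta_p$ and $\{G>t\}$ is a compact neighborhood of $p$ with boundary $l(t)$ --- one then gets
\[
\int_{l(t)}|\nabla G|^2\le \Big(\sup_{l(t)}|\nabla G|\Big)\int_{l(t)}|\nabla G|\le Ct^2 .
\]
Therefore $\Phi(t)=O(t)\to 0$, so $\limsup_{t\to 0^+}\Phi(t)\le 0$ and the first inequality follows.

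For the area bound I would apply Cauchy--Schwarz to the same flux identity,
\[
1=\Big(\int_{l(t)}|\nabla G|\Big)^2\le \mathrm{Area}\big(l(t)\big)\int_{l(t)}|\nabla G|^2\le 4\pi t^2\,\mathrm{Area}\big(l(t)\big),
\]
which rearranges to $\mathrm{Area}(l(t))\ge \frac{1}{4\pi t^2}$. For the rigidity, equality in either inequality at some $T$ forces $\int_{l(T)}|\nabla G|^2=4\pi T^2$, i.e.\ $\Phi(T)=0$; since $\Phi$ is non-increasing and everywhere $\le 0$, this forces $\Phi\equiv 0$ on $(0,T]$, so the equality case of Theorem~\ref{T3} applies and $\{G>T\}$ is isometric to a ball in $\mathbb{R}^3$. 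The step I expect to be the main obstacle is making the asymptotic estimate rigorous: the Ricci bound is only asymptotic, so the gradient estimate and the volume comparison must be localized to a neighborhood of infinity with their error terms shown to vanish in the limit $t\to 0^+$, and one must control the shape of the level sets $l(t)$ well enough to know that $r\asymp 1/t$ there.
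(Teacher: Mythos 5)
Your overall architecture coincides with the paper's: reduce the energy bound to the behavior of $\Phi(t)=\frac{1}{t}\int_{l(t)}|\nabla G|^2-4\pi t$ as $t\to 0^+$ via the monotonicity of Theorem~\ref{T3}, use the flux identity $\int_{l(t)}|\nabla G|=1$ together with Cauchy--Schwarz for the area bound, and feed equality back into the rigidity statement of Theorem~\ref{T3}. The Cauchy--Schwarz step and the rigidity step are fine as you state them.

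The gap is in the one step you yourself flag as the main obstacle, and it is genuine as written. Your route to $\int_{l(t)}|\nabla G|^2=O(t^2)$ rests on the lower bound $G\geq c/r$ ``coming from the volume growth'' and on knowing $r\asymp 1/t$ on $l(t)$. Neither is available here: the hypotheses give only $\liminf_{x\to\infty}\mathrm{Ric}\geq 0$, with no volume growth assumption, and the Li--Yau two-sided estimate $G\asymp\int_r^\infty t\,V(t)^{-1}dt$ (from which $G\geq c/r$ would follow via Bishop) requires globally nonnegative Ricci curvature; under a merely asymptotic bound $\mathrm{Ric}\geq-\epsilon$ outside a compact set the volume can grow too fast for such a lower bound on $G$, and the geometry of the level sets is not controlled. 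The paper avoids all of this: since the Ricci lower bound improves to $0$ at infinity, the Cheng--Yau estimate applied on large balls far from $p$ gives, for every $\varepsilon>0$, a $\delta>0$ with $|\nabla\ln G|\leq\varepsilon$ on $L(0,\delta)$; hence on $l(\delta)$ one has $|\nabla G|\leq\varepsilon G=\varepsilon\delta$ pointwise, and the flux identity yields $\frac{1}{\delta}\int_{l(\delta)}|\nabla G|^2\leq\varepsilon$ with no lower bound on $G$ and no control of $r$ on $l(\delta)$ needed. Note also that you only need $\int_{l(t)}|\nabla G|^2=o(t)$, not $O(t^2)$, so the weaker and actually provable estimate suffices; replacing your asymptotic step by this one closes the argument.
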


Note that in the case that $M$ has nonnegative Ricci curvature, its minimal
positive Green's function $G$ always satisfies $\lim_{x\to \infty}G(x)=0$ by 
\cite{LY} and the number of ends is necessarily one due to the
Cheeger-Gromoll splitting theorem \cite{CG}. Therefore, Theorem \ref{T3} and
Corollary \ref{C1} are both applicable to the universal cover of $M.$

Theorem \ref{T3} is motivated by the work of Colding \cite%
{Co} and Colding-Minicozzi \cite{CM1}, where monotonicity formulas for
functionals of the form

\begin{equation*}
w_{p}\left( r\right) =\frac{1}{r^{n-1}}\int_{b=r}\left\vert \nabla
b\right\vert ^{p}
\end{equation*}%
are derived for $n$-dimensional manifolds with nonnegative Ricci curvature,
where the function $b=G^{-\frac{1}{n-2}}.$ So Theorem \ref{T3} concerns $%
w_{2}$ for dimension $n=3,$ while the exponent $p=3$ in \cite{Co}, and more
generally $p\geq \frac{2n-3}{n-1}$ in \cite{CM1}, for all dimension $n.$
These monotonicity results have been applied to the study of uniqueness of
the tangent cones for Ricci flat manifolds with Euclidean volume growth \cite%
{CM2}. We refer the readers to \cite{CM} for an exposition on monotonicity
formulas in geometric analysis, and \cite{AFM} for their applications to
Willmore type inequalities.

Our second result concerns the bottom spectrum. Recall that the bottom
spectrum $\lambda _{1}\left( M\right) $ is characterized by 
\begin{equation*}
\lambda _{1}\left( M\right) =\inf_{f\in C_{0}^{\infty }\left( M\right) }%
\frac{\int_{M}\left\vert \nabla f\right\vert ^{2}}{\int_{M}f^{2}}.
\end{equation*}%
According to Cheng's theorem \cite{C}, for an $n$-dimensional complete
manifold $M,$

\begin{equation*}
\lambda _{1}\left( M\right) \leq \frac{\left( n-1\right) ^{2}}{4}K
\end{equation*}%
if the Ricci curvature of $\left( M,g\right) $ satisfies $\mathrm{Ric}\geq
-\left( n-1\right) K$ for some nonnegative constant $K.$ By considering the
example of the form $M=\mathbb{H}^{2}\times \mathbb{S}^{n-2}(r),$ where $%
\mathbb{H}^2$ is the standard hyperbolic plane of sectional curvature $-1$
and $\mathbb{S}^{n-2}(r)$ the sphere of radius $r$ in $\mathbb{R}^{n-1},$
one sees that a direct extension of Cheng's result to the scalar curvature
lower bound is not possible for dimension $n\geq 4.$ Indeed, by choosing $r$
accordingly, the scalar curvature of $M$ can be made as large as one
desires, yet $\lambda_1(M)=\frac{1}{4}.$ However, for $n=3,$ one does have
the following theorem.

\begin{theorem}
\label{T2} Let $\left( M,g\right) $ be a three-dimensional complete
noncompact Riemannian manifold with scalar curvature $S\geq -6K$ on $M$ for
some nonnegative constant $K.$ Suppose that $M$ has finitely many ends and
its first Betti number $b_{1}(M)<\infty .$ Moreover, the Ricci curvature of $%
M$ is bounded from below and the volume $V_{x}(1)$ of unit ball $B_{x}(1)$
satisfies

\begin{equation*}
V_x(1)\geq C(\epsilon)\,\exp\left(-2\sqrt{K+\epsilon}\,r(x)\right)
\end{equation*}%
for every $\epsilon>0$ and all $x\in M,$ where $r(x)$ is the geodesic
distance from $x$ to a fixed point $p.$ Then the bottom spectrum of the
Laplacian satisfies 
\begin{equation*}
\lambda _{1}\left( M\right) \leq K.
\end{equation*}
\end{theorem}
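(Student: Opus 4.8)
The plan is to deduce the spectral bound from a sharp volume growth estimate together with Brooks' theorem, which asserts that a complete noncompact manifold satisfies $\lambda_1(M)\le\frac14\mu^2$, where $\mu=\limsup_{r\to\infty}r^{-1}\log\mathrm{Vol}(B_p(r))$ is the exponential volume growth rate (in the finite-volume case one uses the corresponding estimate for the bottom of the essential spectrum, together with $\lambda_1\le\lambda_0^{\mathrm{ess}}$). Thus it suffices to prove the \emph{upper} volume growth bound $\mu\le 2\sqrt K$, which yields $\lambda_1(M)\le\frac14\bigl(2\sqrt K\bigr)^2=K$; this is sharp, as the hyperbolic model $\mathbb H^3$ of curvature $-K$ (where $S=-6K$ and $\lambda_1=K$) shows.

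To estimate the volume growth I would work with the area $A(r)=\mathrm{Area}(\partial B_p(r))$ of geodesic spheres $\Sigma_r$. Writing $m=\Delta r$ for the mean curvature, $h$ for the second fundamental form, $h_0$ for its traceless part, and $K_\Sigma$ for the Gauss curvature of $\Sigma_r$, I combine the trace of the radial Riccati equation $m'+|h|^2+\mathrm{Ric}(\partial_r)=0$ with the three-dimensional Gauss equation $2K_\Sigma=S-2\,\mathrm{Ric}(\partial_r)+m^2-|h|^2$ to eliminate the Ricci term. Since $|h|^2=|h_0|^2+\tfrac12 m^2$ in dimension two, this gives the exact identity (valid within the cut locus)
\[
A''(r)=\int_{\Sigma_r}\Bigl(\tfrac14\,m^2-\tfrac12\,|h_0|^2-\tfrac12\,S+K_\Sigma\Bigr).
\]
Here is where the three-dimensional scalar bound plays the role of a Ricci bound: $S\ge -6K$ gives $-\tfrac12\int_{\Sigma_r}S\le 3K\,A(r)$. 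The intrinsic term is controlled by Gauss--Bonnet, $\int_{\Sigma_r}K_\Sigma=2\pi\chi(\Sigma_r)$; because $M$ has finitely many ends and finite first Betti number, the spheres have uniformly bounded topology for large $r$, so $\int_{\Sigma_r}K_\Sigma\le C$.

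The genuine obstacle is the indefinite term $\tfrac14\int m^2-\tfrac12\int|h_0|^2$: a scalar curvature lower bound controls neither the oscillation of the mean curvature $m$ over $\Sigma_r$ nor the anisotropy $|h_0|$, and the crude estimate $\tfrac14\int m^2-\tfrac12\int|h_0|^2\le\tfrac14\int m^2$ reintroduces an uncontrolled quantity, whose excess over $\tfrac14\,(A')^2/A$ is exactly the variance of $m$. This is precisely where the remaining hypotheses enter: the Ricci lower bound furnishes the a priori Laplacian-comparison and gradient estimates that keep $m$ and $|h|$ bounded and the distance function regular, while the noncollapsing hypothesis $V_x(1)\ge C(\epsilon)\exp(-2\sqrt{K+\epsilon}\,r(x))$ prevents the geodesic spheres from degenerating and so rules out accumulation of this variance. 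I would make this quantitative by contradiction: if $\mu>2\sqrt K$, extract $r_i\to\infty$ along which $A$ grows like $\exp(2\sqrt{K+\delta}\,r_i)$, and show that the averaged identity above, combined with noncollapsing, forces the defect terms to vanish in a limiting configuration, contradicting the strict growth. I expect this control of the mean-curvature oscillation to be the hard step; the rest is a standard comparison package that is special to dimension three.

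Granting that the variance and anisotropy terms are negligible, the identity reduces to $A''\le\tfrac14\,(A')^2/A+3K\,A+C$. Setting $w=\log A$, this becomes the Riccati inequality $w''+\tfrac34\,(w')^2\le 3K+C/A$, and since $A\to\infty$ the last term is $o(1)$; as $w'>2\sqrt K$ forces $w''<0$, one concludes $\limsup_{r\to\infty}w'(r)\le 2\sqrt K$, hence $\mathrm{Vol}(B_p(r))\le C_\delta\,e^{2(\sqrt K+\delta)r}$ for every $\delta>0$ and $\mu\le 2\sqrt K$. Applying Brooks' theorem then yields $\lambda_1(M)\le K$, completing the proof.
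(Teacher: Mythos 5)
Your reduction to Brooks' theorem is fine as far as it goes, but the volume growth bound $\mu\le 2\sqrt K$ that you need is not established, and the step you yourself flag as ``the hard step'' is exactly where the approach breaks down rather than a technical point to be smoothed over. In your identity
\[
A''(r)=\int_{\Sigma_r}\Bigl(\tfrac14\,m^2-\tfrac12\,|h_0|^2-\tfrac12\,S+K_{\Sigma}\Bigr),
\]
Cauchy--Schwarz gives $\int_{\Sigma_r}m^2\ge (A')^2/A$, i.e.\ a bound in the \emph{wrong} direction: to close the Riccati inequality you must bound the variance of $m$ over $\Sigma_r$ from above, and none of the hypotheses does this. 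The variance of $m$ (a point-to-point oscillation of the trace of $h$) is not dominated by $\int|h_0|^2$ (a pointwise anisotropy), and neither the Ricci lower bound nor the noncollapsing assumption gives quantitative control of it; the proposed ``limiting configuration'' contradiction argument has no mechanism behind it. There are further unresolved obstructions in the same step: geodesic spheres are not smooth past the cut locus, so the Riccati/Gauss computation is only formal there, and the number of components of $\partial B_p(r)$ (hence the Gauss--Bonnet term $2\pi\chi(\Sigma_r)$) is not controlled by the number of ends and $b_1(M)$ --- bounded components of $M\setminus \overline{B_p(r)}$ can proliferate. In effect you would be proving a sharp exponential volume growth theorem under a scalar curvature lower bound, which is not known and is at least as hard as the spectral estimate itself.

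The paper's proof is designed precisely to avoid the distance function. It replaces geodesic spheres by level sets of the minimal positive Green's function $G$: these are generically smooth by Sard, their number of components is bounded via the maximum principle and a Mayer--Vietoris argument (Lemma \ref{LT}, using the hypotheses on ends and $b_1$, with properness of $G$ coming from the volume lower bound through a decay estimate of Li--Wang), and --- crucially --- harmonicity of $G$ supplies the refined Kato inequality $|G_{ij}|^2\ge\tfrac32|\nabla|\nabla G||^2$ and makes the Schoen--Yau rewriting of $\mathrm{Ric}(\nabla G,\nabla G)$ in terms of $S$, $S_{l(r)}$ and Hessian terms come out with favorable signs. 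Feeding this into the Bochner formula and testing the Rayleigh quotient with $f=|\nabla G|^{1/2}\phi$ produces the coefficient $\tfrac34$ against the coefficient $\tfrac14$ from expanding $|\nabla f|^2$, which is the actual source of the sharp constant $K$; no volume growth estimate is ever proved or needed. There is no analogue of this structure for $r(x)$, since $r$ is not harmonic. So the proposal is not a viable alternative route; the missing ingredient (control of the mean-curvature variance of geodesic spheres) is a genuine gap, not a deferred technicality.
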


Note that in the case that the Ricci curvature of $M$ is bounded by $\mathrm{%
Ric}\geq -2K,$ the Gromov-Bishop volume comparison theorem \cite[Chapter 2]%
{L} readily implies that volume lower bound for $V_{x}(1)$ holds. So, by
considering the universal cover of $M$ if necessary and modulo the
topological assumption of finitely many ends, Theorem \ref{T2} provides a
faithful generalization of Cheng's result to three-dimensional manifolds
with only scalar curvature lower bound.

\begin{corollary}
Let $\left( M,g\right) $ be a three-dimensional complete noncompact
Riemannian manifold with nonpositive sectional curvature. Assume that the
scalar curvature $S$ is bounded by

\begin{equation*}
S\geq -6K\text{ \ on }M
\end{equation*}
for some nonnegative constant $K.$ Then 
\begin{equation*}
\lambda _{1}\left( M\right) \leq K.
\end{equation*}
\end{corollary}

This is because Theorem \ref{T2} is applicable to the universal cover $%
\tilde{M}$ of $M$ as $\tilde{M}$ is a Cartan-Hadamard manifold with bounded
curvature. Since the bottom spectrum satisfies $\lambda_1(M)\leq \lambda_1(%
\tilde{M}),$ the corollary follows.

Both Theorem \ref{T3} and Theorem \ref{T2} are proved by working with the
minimal positive Green's function $G$ of $M.$ The idea of using Green's
function to bound the bottom spectrum was introduced by the first author in 
\cite{M}. Roughly speaking, one takes a test function $f=\left\vert \nabla
G\right\vert ^{\frac{1}{2}}\phi$ with a carefully chosen cut-off function $%
\phi$ which in turn depends on $G.$ The proofs of both theorems hinge on
manipulating the Bochner formula for the Green's function.

\begin{equation*}
\Delta \left\vert \nabla G\right\vert =\left( \left\vert G_{ij}\right\vert
^{2}-\left\vert \nabla \left\vert \nabla G\right\vert \right\vert
^{2}\right) \left\vert \nabla G\right\vert ^{-1}+\mathrm{Ric}\left( \nabla
G,\nabla G\right) \left\vert \nabla G\right\vert ^{-1}.
\end{equation*}%
A crucial point is to rewrite the Ricci curvature term by mimicking a trick
originated from the work of Schoen and Yau \cite{SY, SY1, SY2} on stable
minimal surfaces in three-dimensional manifolds.

\begin{equation*}
\mathrm{Ric}\left( \nabla G,\nabla G\right) \left\vert \nabla G\right\vert
^{-2}=\frac{1}{2}S-\frac{1}{2}S_{l\left( r\right) }+\frac{1}{\left\vert
\nabla G\right\vert ^{2}}\left( \left\vert \nabla \left\vert \nabla
G\right\vert \right\vert ^{2}-\frac{1}{2}\left\vert \nabla ^{2}G\right\vert
^{2}\right),
\end{equation*}%
where $S_{l\left( r\right) }$ denotes the scalar curvature of the level set $%
l\left(r\right).$ One can then proceed by integrating the formula over the
level sets and applying the Gauss-Bonnet theorem. We note that this kind of
idea has been exploited recently in \cite{S} and \cite{BKHS} as well.

In order to make the argument work, however, we need to ensure that the
level sets $l(t)$ of $G$ are compact with controlled number of components.
This is where all the extra assumptions are used to show the following.

\begin{eqnarray}
\lim_{x\rightarrow \infty }G\left( x\right) &=&0,  \label{s1} \\
\#\mathrm{Conn}\left( l\left( t\right) \right) &\leq &A  \label{s2}
\end{eqnarray}%
for all $t,$ where $A$ is a fixed constant and $\#\mathrm{Conn}\left( l(t)
\right)$ the number of connected components of the level set $l(t).$

Ideally, one would like to prove Theorem \ref{T2} under the sole assumption
that the scalar curvature $S\geq -6K.$ Another natural question is what
happens if $\lambda_1(M)$ achieves its maximum value $K$ in Theorem \ref{T2}%
. In the case of the aforementioned Cheng's theorem, there are rigidity
results \cite{LW1, LW2}.

The study of scalar curvature has a long history with many significant
results. The recent lecture notes \cite{G} and the survey paper \cite{So}
are good sources for the state of the affairs and references.

The structure of the paper is at follows. After collecting some preliminary
results in Section \ref{sect2}, we supply the proofs of Theorem \ref{T3} and
Theorem \ref{T2} in Section \ref{sect3} and Section \ref{sect4},
respectively.

\section{Preliminaries\label{sect2}}

In this section we make some preparations for proving Theorem \ref{T3} and
Theorem \ref{T2}. Let us start with the following result. It relies on an
idea from Schoen-Yau's work on minimal surfaces \cite{SY,SY1, SY2} and
appears as Lemma 4.1 in \cite{BKHS}. We include details here for
completeness.

\begin{lemma}
\label{RicS}Let $\left( M,g\right) $ be a three-dimensional complete
noncompact Riemannian manifold with scalar curvature $S$ and $u$ a harmonic
function on $M.$ Then on each regular level set $l\left( r\right) $ of $u,$

\begin{equation*}
\mathrm{Ric}\left( \nabla u,\nabla u\right) \left\vert \nabla u\right\vert
^{-2}=\frac{1}{2}S-\frac{1}{2}S_{l\left( r\right) }+\frac{1}{\left\vert
\nabla u\right\vert ^{2}}\left( \left\vert \nabla \left\vert \nabla
u\right\vert \right\vert ^{2}-\frac{1}{2}\left\vert \nabla ^{2}u\right\vert
^{2}\right),
\end{equation*}%
where $S_{l\left( r\right) }$ denotes the scalar curvature of $%
l\left(r\right).$
\end{lemma}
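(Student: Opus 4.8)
The plan is to realize each regular level set $l(r)$ as a hypersurface with unit normal $\nu=\nabla u/|\nabla u|$ and to extract the claimed identity from the traced Gauss equation. At a point of $l(r)$ I would choose an orthonormal frame $e_1,e_2,\nu$ with $e_1,e_2$ tangent to the level set. The second fundamental form of $l(r)$ with respect to $\nu$ is then $A(e_i,e_j)=\langle\nabla_{e_i}\nu,e_j\rangle=|\nabla u|^{-1}\nabla^2u(e_i,e_j)$, because the term produced by differentiating the factor $|\nabla u|^{-1}$ points along $\nabla u$ and is annihilated upon pairing with the tangent vector $e_j$. Writing $H=\operatorname{tr}A$ for the mean curvature, the traced Gauss equation $S_{l(r)}=S-2\,\mathrm{Ric}(\nu,\nu)+H^2-|A|^2$ (whose right-hand side depends only on $H^2$ and $|A|^2$, so the sign convention for $A$ is immaterial) rearranges to
\[
\mathrm{Ric}(\nabla u,\nabla u)\,|\nabla u|^{-2}=\frac{1}{2}S-\frac{1}{2}S_{l(r)}+\frac{1}{2}\left(H^2-|A|^2\right).
\]
It therefore suffices to establish the algebraic identity $\tfrac{1}{2}(H^2-|A|^2)=|\nabla u|^{-2}\bigl(|\nabla|\nabla u||^2-\tfrac12|\nabla^2u|^2\bigr)$.

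To do this I would use two facts. First, harmonicity: since $\Delta u=0$, the tangential trace of the Hessian satisfies $\sum_i\nabla^2u(e_i,e_i)=-\nabla^2u(\nu,\nu)$, so $H=-|\nabla u|^{-1}\nabla^2u(\nu,\nu)$. Second, the elementary identity $\nabla^2u(\nabla u,\cdot)=\tfrac12\nabla|\nabla u|^2=|\nabla u|\,\nabla|\nabla u|$, which lets me convert the mixed and normal components of the Hessian into derivatives of $|\nabla u|$: one gets $\nabla^2u(\nu,e_i)=e_i|\nabla u|$ and $\nabla^2u(\nu,\nu)=\nu|\nabla u|$, hence $H^2=|\nabla u|^{-2}(\nu|\nabla u|)^2$.

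Finally I would split the full Hessian norm along the frame,
\[
|\nabla^2u|^2=\sum_{i,j}\nabla^2u(e_i,e_j)^2+2\sum_i\nabla^2u(\nu,e_i)^2+\nabla^2u(\nu,\nu)^2,
\]
which, via $\sum_{i,j}\nabla^2u(e_i,e_j)^2=|A|^2|\nabla u|^2$ and the expressions above, yields $|A|^2|\nabla u|^2=|\nabla^2u|^2-2\sum_i(e_i|\nabla u|)^2-(\nu|\nabla u|)^2$. Substituting $\sum_i(e_i|\nabla u|)^2=|\nabla|\nabla u||^2-(\nu|\nabla u|)^2$ and combining with $H^2|\nabla u|^2=(\nu|\nabla u|)^2$ collapses $(H^2-|A|^2)|\nabla u|^2$ to precisely $2|\nabla|\nabla u||^2-|\nabla^2u|^2$, the desired identity. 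The computation is elementary throughout; the only genuine subtlety is bookkeeping the tangential/normal decomposition of $\nabla^2u$ correctly and invoking harmonicity at the single point where the tangential trace must be traded for the normal second derivative. I note that nothing in this argument is special to dimension three—the identity holds for level-set hypersurfaces in any dimension—though it is in dimension three, where $S_{l(r)}$ is twice the Gauss curvature, that it becomes useful in conjunction with Gauss-Bonnet.
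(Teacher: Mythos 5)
Your argument is correct and is essentially the paper's own proof: both realize $l(r)$ as a hypersurface with normal $\nabla u/|\nabla u|$, apply the twice-traced Gauss equation, use harmonicity to write $H=-|\nabla u|^{-1}\nabla^2u(\nu,\nu)$, and convert $H^2-|A|^2$ into $|\nabla u|^{-2}(2|\nabla|\nabla u||^2-|\nabla^2u|^2)$ via the normal/tangential splitting of the Hessian together with $\nabla^2u(\nabla u,\cdot)=|\nabla u|\,\nabla|\nabla u|$. The only difference is notational (invariant formulation versus the paper's indexed components $u_{11},u_{1a},u_{ab}$), and your closing observation that the identity is dimension-independent is accurate.
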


\begin{proof}
On a regular level set $l\left( r\right) $ of $u,$ its unit normal vector is
given 
\begin{equation*}
e_{1}=\frac{\nabla u}{\left\vert \nabla u\right\vert }.
\end{equation*}%
Choose $\left\{ e_{a}\right\} _{a=2,3},$ unit vectors tangent to $%
l\left(r\right),$ such that $\left\{ e_{1},e_{2},e_{3}\right\} $ forms a
local orthonormal frame on $M.$ Since $u$ is harmonic, the second
fundamental form and the mean curvature of $l\left( r\right) $ are given by

\begin{equation*}
h_{ab}=\frac{u_{ab}}{\left\vert \nabla u\right\vert }\text{ \ and }H=-\frac{%
u_{11}}{\left\vert \nabla u\right\vert }, \text{ \ respectively. }
\end{equation*}%
By the Gauss curvature equation, we have

\begin{equation*}
S_{l\left( r\right) }=S-2R_{11}+H^{2}-h^{2}.
\end{equation*}%
Therefore,

\begin{eqnarray*}
2\mathrm{Ric}\left( \nabla u,\nabla u\right) \left\vert \nabla u\right\vert
^{-2} &=&2R_{11} \\
&=&S-S_{l\left( r\right) }+\frac{1}{\left\vert \nabla u\right\vert ^{2}}%
\left( \left\vert u_{11}\right\vert ^{2}-\left\vert u_{ab}\right\vert
^{2}\right) \\
&=&S-S_{l\left( r\right) }+\frac{1}{\left\vert \nabla u\right\vert ^{2}}%
\left( 2\left\vert \nabla \left\vert \nabla u\right\vert \right\vert
^{2}-\left\vert u_{ij}\right\vert ^{2}\right),
\end{eqnarray*}%
where we have used the fact that

\begin{equation*}
\left\vert \nabla \left\vert \nabla u\right\vert \right\vert ^{2}
=\left\vert u_{11}\right\vert ^{2}+\left\vert u_{1a}\right\vert ^{2}
\end{equation*}
and 
\begin{equation*}
\left\vert u_{ij}\right\vert ^{2} =\left\vert u_{11}\right\vert
^{2}+2\left\vert u_{1a}\right\vert ^{2}+\left\vert u_{ab}\right\vert ^{2}.
\end{equation*}%
This proves the result.
\end{proof}

We will also use the following well known Kato inequality for harmonic
functions.

\begin{lemma}
\label{Kato} Let $\left( M,g\right) $ be a three-dimensional complete
noncompact Riemannian manifold and $u$ a harmonic function on $M.$ Then

\begin{equation*}
\left\vert \nabla ^{2}u\right\vert ^{2}\geq \frac{3}{2}\left\vert \nabla
\left\vert \nabla u\right\vert \right\vert ^{2}\text{ \ on }M.
\end{equation*}
\end{lemma}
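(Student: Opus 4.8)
The plan is to establish the inequality pointwise at any point where $\nabla u \neq 0$, since the critical set of $u$ has empty interior and the estimate is only ever applied on regular level sets. First I would fix such a point and adopt exactly the orthonormal frame from the proof of Lemma~\ref{RicS}, with $e_{1}=\nabla u/\left\vert \nabla u\right\vert$ and $e_{2},e_{3}$ tangent to the level set. The two algebraic identities recorded there,
\begin{equation*}
\left\vert \nabla \left\vert \nabla u\right\vert \right\vert ^{2}=\left\vert u_{11}\right\vert ^{2}+\left\vert u_{1a}\right\vert ^{2},\qquad \left\vert u_{ij}\right\vert ^{2}=\left\vert u_{11}\right\vert ^{2}+2\left\vert u_{1a}\right\vert ^{2}+\left\vert u_{ab}\right\vert ^{2},
\end{equation*}
already reduce the problem to a purely linear-algebraic comparison between the two right-hand sides, so no further differential geometry is needed beyond that lemma.

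The key input is harmonicity. Since $\Delta u=u_{11}+u_{22}+u_{33}=0$, the trace of the tangential block $\left( u_{ab}\right) _{a,b=2,3}$ of the Hessian equals $-u_{11}$. I would then apply the elementary bound for a symmetric $2\times 2$ matrix, namely $\lambda _{1}^{2}+\lambda _{2}^{2}\geq \tfrac{1}{2}\left( \lambda _{1}+\lambda _{2}\right) ^{2}$ applied to its eigenvalues, to conclude
\begin{equation*}
\left\vert u_{ab}\right\vert ^{2}\geq \tfrac{1}{2}\left( u_{22}+u_{33}\right) ^{2}=\tfrac{1}{2}\left\vert u_{11}\right\vert ^{2}.
\end{equation*}
This is the one place where the harmonic condition is genuinely used, and it is the crux of the refinement over the naive Kato bound $\left\vert \nabla ^{2}u\right\vert ^{2}\geq \left\vert \nabla \left\vert \nabla u\right\vert \right\vert ^{2}$.

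Combining the second identity with the last display gives $\left\vert u_{ij}\right\vert ^{2}\geq \tfrac{3}{2}\left\vert u_{11}\right\vert ^{2}+2\left\vert u_{1a}\right\vert ^{2}$. Comparing this with $\tfrac{3}{2}\left\vert \nabla \left\vert \nabla u\right\vert \right\vert ^{2}=\tfrac{3}{2}\left\vert u_{11}\right\vert ^{2}+\tfrac{3}{2}\left\vert u_{1a}\right\vert ^{2}$, the difference is $\tfrac{1}{2}\left\vert u_{1a}\right\vert ^{2}\geq 0$, because the coefficient $2$ on the mixed terms dominates $\tfrac{3}{2}$; this yields the claim. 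I do not anticipate a serious obstacle, as the argument is a short computation. The only points meriting attention are the dimension-specific constants: the factor $\tfrac{1}{2}$ in the trace inequality is exactly $\tfrac{1}{n-1}$ with $n=3$, and the slack $2\geq \tfrac{3}{2}$ on the mixed terms is precisely what makes the constant $\tfrac{3}{2}=\tfrac{n}{n-1}$ come out right in this dimension.
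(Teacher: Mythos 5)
Your argument is correct and coincides with the paper's own proof: the same orthonormal frame adapted to $\nabla u$, the same two identities for $\left\vert \nabla \left\vert \nabla u\right\vert \right\vert ^{2}$ and $\left\vert u_{ij}\right\vert ^{2}$, the same use of harmonicity via $\left\vert u_{ab}\right\vert ^{2}\geq \tfrac{1}{2}\left\vert u_{22}+u_{33}\right\vert ^{2}=\tfrac{1}{2}\left\vert u_{11}\right\vert ^{2}$, and the same final comparison absorbing the mixed terms. No issues.
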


\begin{proof}
It suffices to prove this at points where $\left\vert \nabla u\right\vert
\neq 0.$ Let

\begin{equation*}
e_{1}=\frac{\nabla u}{\left\vert \nabla u\right\vert }
\end{equation*}%
and choose $\left\{ e_{a}\right\} _{a=2,3}$ such that $\left\{
e_{1},e_{2},e_{3}\right\} $ is a local orthonormal frame on $M.$ Then

\begin{eqnarray*}
\left\vert u_{ij}\right\vert ^{2} &=&\left\vert u_{11}\right\vert
^{2}+2\left\vert u_{1a}\right\vert ^{2}+\left\vert u_{ab}\right\vert ^{2} \\
&\geq &\left\vert u_{11}\right\vert ^{2}+2\left\vert u_{1a}\right\vert ^{2}+%
\frac{1}{2}\left\vert u_{22}+u_{33}\right\vert ^{2} \\
&=&\frac{3}{2}\left\vert u_{11}\right\vert ^{2}+2\left\vert
u_{1a}\right\vert ^{2},
\end{eqnarray*}%
where in the last line we have used the fact that $u$ is harmonic. Therefore,

\begin{eqnarray*}
\left\vert u_{ij}\right\vert ^{2} &\geq &\frac{3}{2}\left( \left\vert
u_{11}\right\vert ^{2}+\left\vert u_{1a}\right\vert ^{2}\right) \\
&=&\frac{3}{2}\left\vert \nabla \left\vert \nabla u\right\vert \right\vert
^{2}.
\end{eqnarray*}
\end{proof}

We also need the following topological lemma concerning the number of
components of the level sets of a proper Green's function. The proof is
inspired by \cite{LT}.

\begin{lemma}
\label{LT} Let $\left( M,g\right) $ be a complete noncompact Riemannian
manifold with $k$ ends and finite first Betti number $b_1(M).$ Assume that $%
\left( M,g\right) $ is nonparabolic and its minimal positive Green's
function $G$ satisfies $\lim_{x\to \infty} G(x)=0.$ Then there exists $t_0>0$
such that the level set $l(t)$ of $G$ has exactly $k$ components for all $%
t\leq t_0.$ In the case that $M$ has only one end and the first Betti number 
$b_{1}\left( M\right) =0,$ the level set $l\left( t\right) $ is connected
for all $t>0.$
\end{lemma}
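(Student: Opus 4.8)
The plan is to combine two elementary maximum--principle observations with an intersection--theoretic count of boundary components; by Sard's theorem it suffices to treat regular values $t$ and then pass to arbitrary $t$ by approximation. First, since $\lim_{x\to\infty}G(x)=0$, the set $\{G\ge t\}$ is compact for every $t>0$, so each $l(t)$ is compact. I would then extract two facts from the maximum principle for the harmonic function $G$ on $M\setminus\{p\}$. Any component $U$ of the superlevel set $\{G>t\}$ not containing $p$ would carry a harmonic function attaining its maximum on $\partial U\subset l(t)$ yet exceeding $t$ in the interior, which is impossible; hence $\{G>t\}$ is connected for every $t>0$. Dually, any bounded component $V$ of $\{G<t\}$ would satisfy $G\ge t$ on $V$ by the minimum principle, a contradiction, so every component of $\{G<t\}$ is unbounded. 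Because $\{G\ge t\}$ is a compact exhaustion of $M$ as $t\downarrow 0$ and $M$ has exactly $k$ ends, the number of these (unbounded) components of $\{G<t\}$ equals $k$ once $t$ is small; write $V_1,\dots,V_k$ for them.

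For a regular value $t$ one has $l(t)=\bigsqcup_{j}\partial V_j$, so the number $N(t)$ of components of $l(t)$ is at least $k$, and the issue is to bound it above. Suppose some $\partial V_j$ has components $\Sigma_1,\dots,\Sigma_{m_j}$ with $m_j\ge 2$. For each $i\ge 2$ I would build a loop $\gamma_{j,i}$ running from a point near $\Sigma_1$ to a point near $\Sigma_i$ through the connected set $\{G>t\}$, crossing $\Sigma_i$ once, and returning through the connected set $V_j$, crossing $\Sigma_1$ once. Then $\gamma_{j,i}$ meets $l(t)$ transversally only in $\Sigma_1$ and $\Sigma_i$, so the intersection pairing of a $1$-cycle with a closed surface gives $\gamma_{j,i}\cdot\Sigma_{j',i'}=\pm\delta_{(j,i),(j',i')}$ for all $i'\ge 2$. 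Since a null-homologous cycle pairs trivially with any closed surface, the $\sum_j(m_j-1)=N(t)-k$ classes $[\gamma_{j,i}]$ are linearly independent in $H_1(M;\mathbb{R})$, whence
\[
N(t)\le k+b_1(M).
\]
In the special case $k=1$, $b_1(M)=0$, the complement of a compact set has a single unbounded component, so $\{G<t\}$ is connected for \emph{every} $t>0$; together with connectedness of $\{G>t\}$ the bound forces $N(t)=1$, proving $l(t)$ is connected for all $t>0$.

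The remaining task, and the step I expect to be the main obstacle, is to upgrade $N(t)\le k+b_1(M)$ to the exact equality $N(t)=k$ for all small $t$ when $b_1(M)<\infty$. The geometric input is that $l(t)$ leaves every compact set as $t\downarrow 0$, so for small $t$ it lies in the disjoint ends, with each $\partial V_j$ near infinity in a single end $E_j$. The plan is to rerun the intersection argument with the connecting loops pushed entirely into a fixed neighborhood of infinity, and to use $b_1(M)<\infty$ in the form that there is a compact set outside of which every $1$-cycle is homologous into a fixed finite system of generators: once $t$ is so small that $l(t)$ lies beyond this set, no new independent class $[\gamma_{j,i}]$ can arise, forcing $m_j=1$ for all $j$. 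The delicate points are localizing the loops away from the core $\{G>t\}$, which a priori reaches back to $p$, and making the stabilization of the homology at infinity precise; an alternative is to analyze how $N(t)$ changes across the critical values of $G$, showing that for small $t$ only mergings occur so that $N(t)$ is non-increasing and hence eventually equal to its lower bound $k$.
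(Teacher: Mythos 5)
Your argument diverges from the paper's: the paper decomposes $M=L(t,\infty)\cup L(0,t+\rho)$ and runs a Mayer--Vietoris sequence, using that $H_1(L(0,t+\rho))\to H_1(M)$ is zero once $t_0$ is small enough that representatives of $H_1(M)$ all lie in $L(t_0,\infty)$, to conclude directly that $L(t,t+\rho)$ has exactly $k$ components; you instead use the intersection pairing of connecting loops with the closed co-oriented level surfaces. Your two maximum-principle observations and the count of unbounded components of $\{G<t\}$ match the paper's, and your treatment of the second assertion ($k=1$, $b_1=0$, connectedness for all regular $t$) is complete and correct -- indeed this is the case Theorem \ref{T3} actually uses. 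But for the first assertion you prove only $k\le N(t)\le k+b_1(M)$ and explicitly leave the upgrade to $N(t)=k$ unresolved; as written, the proposal does not prove the lemma.

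That gap is real but, fortunately, closable by a remark you almost make and then talk yourself out of. Since $b_1(M)<\infty$, fix a compact set $K$ containing cycles representing a basis of $H_1(M;\mathbb{R})$, and take $t_0$ small enough that $l(t)\subset M\setminus K$ for all $t\le t_0$ (possible since $\lim_{x\to\infty}G=0$ forces $\min_K G>0$). The pairing of a $1$-cycle with the closed two-sided surface $\Sigma_{i'}^{(j')}$ depends only on the homology class of the cycle, so it can be computed on a representative lying in $K$, which is disjoint from $\Sigma_{i'}^{(j')}$; hence this pairing vanishes identically on $H_1(M;\mathbb{R})$ for $t\le t_0$. This contradicts $\gamma_{j,i}\cdot\Sigma_i^{(j)}=\pm1$ unless every $m_j=1$. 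In particular there is no need to ``localize the loops away from the core $\{G>t\}$'' -- that worry is a red herring, since homological invariance of the pairing does the localization for you. Two smaller points: (i) the passage from regular values to arbitrary $t$ deserves more than ``by approximation''; the paper handles it by showing $L(t,t+\rho)$ has $k$ components for all small $\rho$ and writing $l(t)$ as a nested intersection of the compacta $\overline{L(t,t+\rho)}$, and some such argument is needed for the ``all $t>0$'' claim in the one-end case; (ii) for the application in Theorem \ref{T2} only an upper bound on $\#\mathrm{Conn}(l(t))$ is required, so your bound $k+b_1(M)$ would already suffice there, but not for the lemma as stated.
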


\begin{proof}
We first claim that 
\begin{equation}
L\left(a,\infty \right) \text{ \ is connected}  \label{v3}
\end{equation}%
for all $a>0.$ Indeed, if this is not true, then there exists a connected
component $L_{0}$ of $L\left( a,\infty \right) $ which does not contain the
pole $p$ of $G.$ Moreover, $L_{0}$ is bounded by the fact that $\lim_{x\to
\infty}G(x)=0.$ Then the harmonic function $G$ on $L_{0}$ must achieve its
maximum in the interior of $L_{0},$ which is a contradiction.

We also note that $L(0, a)$ contains no bounded components for any $a>0.$
Otherwise, on such a bounded component the function $G$ would achieve its
minimum at an interior point.

Since the first Betti number of $M$ is finite, we may choose $t_{0}$
sufficiently small so that all the representatives of $H_{1}\left( M\right) $
lie in $L\left( t_{0},\infty \right).$ By arranging $t_0$ to be even smaller
if necessary, we may assume that $L(0, t)$ has exactly $k$ components for $%
t\leq t_0$ by the fact that it contains no bounded components and that $M$
has $k$ ends.

For $t<t_{0}$ and $\rho >0$ such that $t+\rho<t_0,$ since $M=L\left(
t,\infty \right) \cup L\left(0,t+\rho \right),$ we have the Mayer-Vietoris
sequence

\begin{gather*}
H_{1}\left( L\left( t,\infty \right) \right) \oplus H_{1}\left( L\left(
0,t+\rho \right) \right) \rightarrow H_{1}\left( M\right) \rightarrow
H_{0}\left( L\left( t,t+\rho \right) \right) \\
\rightarrow H_{0}\left( L\left( t,\infty \right) \right) \oplus H_{0}\left(
L\left( 0,t+\rho \right) \right) \rightarrow H_{0}(M).
\end{gather*}%
Note that $H_{1}\left( L\left( 0,t+\rho \right) \right) $ is trivial because
all the representatives of $H_{1}\left( M\right) $ lie inside $L\left(
t_{0},\infty \right).$ In view of (\ref{v3}), we therefore conclude that

\begin{equation*}
H_{0}\left( L\left( t,t+\rho \right) \right) =H_{0}\left(L\left( 0,t+\rho
\right) \right)= \mathbb{Z}\oplus ..\oplus \mathbb{Z}
\end{equation*}%
with $k$ summands. In other words, $L\left(t,t+\rho \right) $ has $k$
components. Since $\rho >0$ can be arbitrarily small, this proves that

\begin{equation}
l\left( t\right) \text{ has at most }k\text{ components}  \label{v6}
\end{equation}%
for $t\leq t_{0}.$ It is obvious that $l(t)$ can not have fewer than $k$
components for any $t\leq t_0$ as $L(0, t_0)$ has $k$ components. Therefore, 
$l(t)$ has exactly $k$ components for $t\leq t_0.$

In the special case that $b_{1}\left( M\right) =0,$ the number $t_{0}$ can
be taken arbitrarily large. By (\ref{v6}), one concludes that $l\left(
t\right) $ is connected for all $t>0.$
\end{proof}

\section{Nonnegative scalar curvature\label{sect3}}

In this section, we work with three-dimensional complete manifolds with
nonnegative scalar curvature and prove Theorem \ref{T3} which is restated
below.

\begin{theorem}
\label{Positive} Let $\left( M,g\right) $ be a complete noncompact
three-dimensional manifold with nonnegative scalar curvature. Assume that $M$
has one end and its first Betti number $b_{1}\left( M\right) =0.$ If $M$ is
nonparabolic and the minimal positive Green's function $G\left( x\right)
=G\left( p,x\right) $ satisfies $\lim_{x\rightarrow \infty }G(x)=0,$ then

\begin{equation*}
\frac{d}{dt}\left( \frac{1}{t}\int_{l\left( t\right) }\left\vert \nabla
G\right\vert ^{2}-4\pi t\right) \leq 0
\end{equation*}%
for all $t>0.$ Moreover, equality holds for some $T>0$ if and only if the
super level set $\left\{x\in M, G(x)>T\right\}$ is isometric to a ball in
the Euclidean space $\mathbb{R}^3.$
\end{theorem}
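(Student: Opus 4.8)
The plan is to prove the equivalent inequality $\frac{d}{dt}\bigl(\frac{1}{t}\int_{l(t)}|\nabla G|^{2}\bigr)\le 4\pi$ and to read off rigidity from the equality discussion below. I would begin by assembling the two facts that tame the level sets. For almost every $t>0$ the set $l(t)$ is a smooth compact surface, and Lemma \ref{LT} (applicable since $M$ has one end and $b_{1}(M)=0$) guarantees that it is \emph{connected}; this connectedness is what will later fix the Euler characteristic, so the topological hypotheses enter here in an essential way. Integrating $\Delta G=-\delta_{p}$ over $\{G>t\}$ also yields the flux normalization $\int_{l(t)}|\nabla G|=1$. I would then feed the Schoen--Yau identity of Lemma \ref{RicS} into the Bochner formula for $|\nabla G|$; the standalone Ricci term disappears, leaving the pointwise identity
\[
\Delta|\nabla G|=\frac{|\nabla^{2}G|^{2}}{2|\nabla G|}+\frac{1}{2}|\nabla G|\bigl(S-S_{l(t)}\bigr),
\]
in which only the ambient scalar curvature $S$ and the intrinsic curvature $S_{l(t)}$ of the level surface survive.

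The core is a monotonicity computation carried out, as in Colding and Colding--Minicozzi \cite{Co,CM1}, by working with $b=G^{-1}$, for which $\frac{1}{t}\int_{l(t)}|\nabla G|^{2}$ is precisely the scale-invariant energy $w_{2}$. Integrating the displayed identity over the level sets by the co-area formula brings in exactly three curvature contributions: the intrinsic term $\int_{l(t)}S_{l(t)}$, the ambient term $\int_{l(t)}S\ge 0$, and a nonnegative Hessian term, the last of which is further constrained by the Kato inequality of Lemma \ref{Kato}. The intrinsic term I would control by Gauss--Bonnet on the connected closed surface $l(t)$: since $S_{l(t)}=2K_{l(t)}$ and $\chi(l(t))\le 2$,
\[
\int_{l(t)}S_{l(t)}=4\pi\,\chi(l(t))\le 8\pi .
\]
This is exactly the input that produces the borderline constant $4\pi$; had $l(t)$ possessed several components, the bound would weaken to a larger multiple of $2\pi$, which is why connectedness is indispensable. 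Combining these contributions through the $b$-weighting of \cite{CM1} gives $\frac{d}{dt}\bigl(\frac{1}{t}\int_{l(t)}|\nabla G|^{2}\bigr)\le 4\pi$, as desired.

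The main obstacle is precisely this bookkeeping. The intrinsic curvature $S_{l(t)}$ surfaces only after the Bochner identity is integrated, so one must differentiate the energy and organize the outcome so that the sole nonpositive contribution is the Gauss--Bonnet term while every remaining term is nonnegative (by $S\ge 0$ and Kato). This is what forces the passage to the $b$-weighting and the careful integration by parts; it also requires controlling the behaviour near the pole $p$, where $G$ and $|\nabla G|$ carry the Euclidean singularity and the model contributes exactly the limiting borderline value. Showing that this pole contribution does not spoil the inequality is the crux of the estimate.

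Finally, for rigidity I would trace the equality case. Equality at some $T$ forces every inequality used to be saturated: $\chi(l(T))=2$, so $l(T)$ is a topological sphere; $S\equiv 0$; and the Kato inequality is saturated, which by the proof of Lemma \ref{Kato} means $G_{22}=G_{33}$ and $G_{23}=0$, i.e. $l(T)$ is totally umbilic, together with $|\nabla G|$ constant along it. On $l(T)$ the function $G$ then behaves exactly as the Euclidean Green's function on a round sphere. Propagating these conditions along the gradient flow of $G$ through the foliation of $\{G>T\}$ by the surfaces $l(t)$ identifies the induced metric with the flat one and each $l(t)$ with a round sphere, so $\{G>T\}$ is isometric to a Euclidean ball; the converse is a direct computation. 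The delicate point, which I would treat with care, is upgrading the pointwise equalities on the single surface $l(T)$ to metric rigidity of the entire super-level set.
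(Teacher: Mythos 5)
You have assembled the right ingredients --- the Schoen--Yau rewriting of $\mathrm{Ric}(\nabla G,\nabla G)$ via Lemma \ref{RicS}, the Kato inequality of Lemma \ref{Kato}, and Gauss--Bonnet on the connected level sets supplied by Lemma \ref{LT}, which is indeed where the constant $4\pi$ and the topological hypotheses enter --- but the central quantitative step is missing, and it cannot be outsourced to the ``$b$-weighting of \cite{CM1}'' as you propose. The monotonicity formulas of \cite{Co,CM1} are proved under a nonnegative \emph{Ricci} curvature hypothesis; the whole point here is that only the scalar curvature is controlled, so the combination of terms must be redone and the constants must match exactly. Concretely, two mechanisms are absent from your sketch. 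First, you never relate $\frac{d}{dt}\bigl(t^{-1}w(t)\bigr)$, where $w(t)=\int_{l(t)}|\nabla G|^{2}$, to the integrated Bochner term: this is done by applying Green's identity to the pair $\bigl(G^{-2},|\nabla G|\bigr)$ on $L(t,T)$ and letting $T\to\infty$, using the Euclidean asymptotics of $G$ at the pole to kill the boundary term at $l(T)$, which yields
\begin{equation*}
t^{-2}w'(t)=-2t^{-3}w(t)+6\int_{t}^{\infty}r^{-4}w(r)\,dr-\int_{L(t,\infty)}G^{-2}\Delta\left\vert \nabla G\right\vert .
\end{equation*}
Second, and more importantly, the Hessian term cannot be treated as merely ``nonnegative'' and discarded: after Kato it carries the precise coefficient $\tfrac{3}{4}\int_{l(r)}\left\vert \nabla\left\vert \nabla G\right\vert \right\vert^{2}\left\vert \nabla G\right\vert^{-2}$, which must be bounded \emph{below} by $\tfrac{3}{4}(w')^{2}/w$ via Cauchy--Schwarz applied to $w'(r)=\int_{l(r)}\langle\nabla\left\vert \nabla G\right\vert,\nabla G\rangle\left\vert \nabla G\right\vert^{-1}$, and then absorbed through the square-completion $0\le w\bigl(\tfrac{w'}{w}-\tfrac{2}{r}\bigr)^{2}$ and an integration by parts; it is exactly this absorption that cancels the $-2t^{-3}w$ and $6\int r^{-4}w$ terms above and leaves only the Gauss--Bonnet contribution $4\pi/t$. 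Dropping the Hessian term, or keeping it with the wrong weight, does not close the argument. You yourself flag this bookkeeping as ``the crux of the estimate,'' which is an accurate self-assessment: the crux is not carried out.

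The rigidity discussion has the same status. Your list of saturated inequalities is correct in outline, but the efficient route is to observe that equality forces the explicit Hessian $G_{11}=32\pi^{2}G^{3}$, $G_{22}=G_{33}=-16\pi^{2}G^{3}$, with vanishing off-diagonal entries, together with $\left\vert \nabla G\right\vert=4\pi G^{2}$ on $L(T,\infty)$; then $f=\frac{1}{4\pi G}$ satisfies $\left\vert \nabla f\right\vert=1$ and the Hessian of $\tfrac{1}{2}f^{2}$ is the identity, which immediately identifies $L(T,\infty)$ with a Euclidean ball. Your proposed propagation along the gradient flow could presumably be made to work, but as written it is another deferred computation rather than a proof.
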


\begin{proof}
Recall that 
\begin{eqnarray*}
l\left( t\right)  &=&\left\{ x\in M:G\left( x\right) =t\right\}  \\
L\left( a,b\right)  &=&\left\{ x\in M:a<G\left( x\right) <b\right\} .
\end{eqnarray*}%
By the assumption that $\lim_{x\rightarrow \infty }G(x)=0,$ the level set $%
l\left( t\right) $ is compact for every $t>0.$ Moreover, since $M$ is
assumed to have one end and $b_{1}\left( M\right) =0,$ Lemma \ref{LT}
implies that $l\left( t\right) $ is connected for all $t>0.$

Consider the function 
\begin{equation*}
w\left( t\right) =\int_{l\left( t\right) }\left\vert \nabla G\right\vert^{2}.
\end{equation*}%
Whenever $l\left( t\right) $ is regular, its mean curvature $H$ is given by

\begin{equation*}
H_{l\left( t\right) }=\frac{\sum_{a}G_{aa}}{\left\vert \nabla G\right\vert }%
=-\frac{G_{11}}{\left\vert \nabla G\right\vert }=-\frac{\left\langle \nabla
\left\vert \nabla G\right\vert ,\nabla G\right\rangle }{\left\vert \nabla
G\right\vert ^{2}},
\end{equation*}%
where $e_{1}=\frac{\nabla G}{\left\vert \nabla G\right\vert }$ and $%
\left\{e_{a}\right\} _{a=2,3}$ are unit tangent vectors on $l\left( t\right) 
$ such that $\left\{ e_{1},e_{2},e_{3}\right\} $ is a local orthonormal
frame on $M.$ It follows that

\begin{eqnarray*}
\frac{dw}{dt}\left( t\right) &=&\int_{l\left( t\right) }\left( \frac{%
\left\langle \nabla \left\vert \nabla G\right\vert ^{2},\nabla
G\right\rangle }{\left\vert \nabla G\right\vert ^{2}}+\frac{H_{l\left(
t\right) }}{\left\vert \nabla G\right\vert }\left\vert \nabla G\right\vert
^{2}\right) \\
&=&\int_{l\left( t\right) }\frac{\left\langle \nabla \left\vert \nabla
G\right\vert ,\nabla G\right\rangle }{\left\vert \nabla G\right\vert }.
\end{eqnarray*}%
Multiplying the equation by $t^{-2}$ we get

\begin{equation}
t^{-2}\frac{dw}{dt}\left( t\right) =\int_{l\left( t\right) }\frac{%
\left\langle \nabla \left\vert \nabla G\right\vert ,\nabla G\right\rangle }{%
\left\vert \nabla G\right\vert }G^{-2}.  \label{m1}
\end{equation}%
On the other hand, by Green's identity we have

\begin{eqnarray*}
&&\int_{L\left( t,T\right) }\left( G^{-2}\Delta \left\vert \nabla
G\right\vert -\left\vert \nabla G\right\vert \Delta G^{-2}\right) \\
&=&\int_{l\left( T\right) }\left( G^{-2}\frac{\left\langle \nabla \left\vert
\nabla G\right\vert ,\nabla G\right\rangle }{\left\vert \nabla G\right\vert }%
-\left\vert \nabla G\right\vert \frac{\left\langle \nabla G^{-2},\nabla
G\right\rangle }{\left\vert \nabla G\right\vert }\right) \\
&&-\int_{l\left( t\right) }\left( G^{-2}\frac{\left\langle \nabla \left\vert
\nabla G\right\vert ,\nabla G\right\rangle }{\left\vert \nabla G\right\vert }%
-\left\vert \nabla G\right\vert \frac{\left\langle \nabla G^{-2},\nabla
G\right\rangle }{\left\vert \nabla G\right\vert }\right) .
\end{eqnarray*}%
Since $G$ is harmonic on $L\left( t,T\right) $,

\begin{equation*}
\left\vert \nabla G\right\vert \Delta G^{-2}=6G^{-4}\left\vert \nabla
G\right\vert ^{3}.
\end{equation*}%
Note that as $x\rightarrow p,$

\begin{equation}
G\left( x\right) \simeq \frac{1}{4\pi }\frac{1}{r\left( x\right) }\text{ and 
}\left\vert \nabla G\right\vert \left( x\right) \simeq \frac{1}{4\pi }\frac{1%
}{r^{2}\left( x\right) }.  \label{m2}
\end{equation}%
Hence,

\begin{equation*}
\lim_{T\rightarrow \infty }\int_{l\left( T\right) }\left( G^{-2}\frac{%
\left\langle \nabla \left\vert \nabla G\right\vert ,\nabla G\right\rangle }{%
\left\vert \nabla G\right\vert }-\left\vert \nabla G\right\vert \frac{%
\left\langle \nabla G^{-2},\nabla G\right\rangle }{\left\vert \nabla
G\right\vert }\right) =0
\end{equation*}%
and 
\begin{equation*}
\int_{L\left( t,\infty \right) }\left\vert G^{-2}\Delta \left\vert \nabla
G\right\vert -\left\vert \nabla G\right\vert \Delta G^{-2}\right\vert
<\infty.
\end{equation*}%
In conclusion, we obtain the following identity.

\begin{eqnarray*}
&&\int_{l\left( t\right) }\left( G^{-2}\frac{\left\langle \nabla \left\vert
\nabla G\right\vert ,\nabla G\right\rangle }{\left\vert \nabla G\right\vert }%
-\left\vert \nabla G\right\vert \frac{\left\langle \nabla G^{-2},\nabla
G\right\rangle }{\left\vert \nabla G\right\vert }\right)  \\
&=&-\int_{L\left( t,\infty \right) }\left( G^{-2}\Delta \left\vert \nabla
G\right\vert -6G^{-4}\left\vert \nabla G\right\vert ^{3}\right) .
\end{eqnarray*}%
Together with (\ref{m1}), we conclude that

\begin{eqnarray}
t^{-2}\frac{dw}{dt}\left( t\right) &=&\int_{l\left( t\right) }G^{-2}\frac{%
\left\langle \nabla \left\vert \nabla G\right\vert ,\nabla G\right\rangle }{%
\left\vert \nabla G\right\vert }  \label{m2.1} \\
&=&-2\int_{l\left( t\right) }G^{-3}\left\vert \nabla G\right\vert
^{2}-\int_{L\left( t,\infty \right) }G^{-2}\Delta \left\vert \nabla
G\right\vert  \notag \\
&&+6\int_{L\left( t,\infty \right) }G^{-4}\left\vert \nabla G\right\vert
^{3}.  \notag
\end{eqnarray}%
Note that by the co-area formula,

\begin{eqnarray*}
\int_{L\left( t,\infty \right) }G^{-4}\left\vert \nabla G\right\vert ^{3}
&=&\int_{t}^{\infty }r^{-4}\int_{l\left( r\right) }\left\vert \nabla
G\right\vert ^{2} \\
&=&\int_{t}^{\infty }r^{-4}w\left( r\right) dr.
\end{eqnarray*}%
Hence, (\ref{m2.1}) can be written as

\begin{eqnarray}
t^{-2}\frac{dw}{dt}\left( t\right) &=&-2t^{-3}w\left( t\right)
+6\int_{t}^{\infty }r^{-4}w\left( r\right) dr  \label{m3} \\
&&-\int_{L\left( t,\infty \right) }G^{-2}\Delta \left\vert \nabla
G\right\vert.  \notag
\end{eqnarray}

We now estimate the last term. Using the Bochner formula

\begin{equation*}
\Delta \left\vert \nabla G\right\vert =\left( \left\vert G_{ij}\right\vert
^{2}-\left\vert \nabla \left\vert \nabla G\right\vert \right\vert
^{2}\right) \left\vert \nabla G\right\vert ^{-1}+\mathrm{Ric}\left( \nabla
G,\nabla G\right) \left\vert \nabla G\right\vert ^{-1},
\end{equation*}%
we have

\begin{eqnarray*}
\int_{l\left( r\right) }\left\vert \nabla G\right\vert ^{-1}\Delta
\left\vert \nabla G\right\vert &=&\int_{l\left( r\right) }\left( \left\vert
G_{ij}\right\vert ^{2}-\left\vert \nabla \left\vert \nabla G\right\vert
\right\vert ^{2}\right) \left\vert \nabla G\right\vert ^{-2} \\
&&+\int_{l\left( r\right) }\mathrm{Ric}\left( \nabla G,\nabla G\right)
\left\vert \nabla G\right\vert ^{-2}.
\end{eqnarray*}%
Applying Lemma \ref{RicS} to $G$ gives

\begin{equation*}
\mathrm{Ric}\left( \nabla G,\nabla G\right) \left\vert \nabla G\right\vert
^{-2}=\frac{1}{2}S-\frac{1}{2}S_{l\left( r\right) }+\left( \left\vert \nabla
\left\vert \nabla G\right\vert \right\vert ^{2}-\frac{1}{2}\left\vert
G_{ij}\right\vert ^{2}\right) \left\vert \nabla G\right\vert ^{-2},
\end{equation*}%
where $S_{l\left( r\right) }$ is the scalar curvature of $l\left( r\right).$
We therefore conclude that

\begin{equation}
\int_{l\left( r\right) }\left\vert \nabla G\right\vert ^{-1}\Delta
\left\vert \nabla G\right\vert =\frac{1}{2}\int_{l\left( r\right) }\left(
\left\vert G_{ij}\right\vert ^{2}\left\vert \nabla G\right\vert
^{-2}+S-S_{l\left( r\right) }\right).  \label{m5}
\end{equation}%
Note that by Lemma \ref{Kato},

\begin{equation*}
\left\vert G_{ij}\right\vert ^{2}\geq \frac{3}{2}\left\vert \nabla
\left\vert \nabla G\right\vert \right\vert ^{2}.
\end{equation*}%
Also, since $l\left( r\right) $ is compact and connected for any $r>0,$ the
Gauss-Bonnet theorem implies that

\begin{equation*}
\int_{l\left( r\right) }S_{l\left( r\right) }=4\pi \chi \left(
l\left(r\right) \right) \leq 8\pi
\end{equation*}%
whenever $r$ is a regular value of $G.$ Therefore, on any regular level set $%
l(r),$ one obtains from (\ref{m5}) that

\begin{equation}
\int_{l\left( r\right) }\left\vert \nabla G\right\vert ^{-1}\Delta
\left\vert \nabla G\right\vert \geq \frac{3}{4}\int_{l\left( r\right)
}\left\vert \nabla \left\vert \nabla G\right\vert \right\vert ^{2}\left\vert
\nabla G\right\vert ^{-2}-4\pi.  \label{m6}
\end{equation}

Observe from (\ref{m1}) that

\begin{eqnarray}
\left\vert w^{\prime }\left( r\right) \right\vert &=&\left\vert
\int_{l\left( r\right) }\frac{\left\langle \nabla \left\vert \nabla
G\right\vert ,\nabla G\right\rangle }{\left\vert \nabla G\right\vert }%
\right\vert  \label{m6'} \\
&\leq &\int_{l\left( r\right) }\left\vert \nabla \left\vert \nabla
G\right\vert \right\vert  \notag \\
&\leq &\left( \int_{l\left( r\right) }\left\vert \nabla \left\vert \nabla
G\right\vert \right\vert ^{2}\left\vert \nabla G\right\vert ^{-2}\right) ^{%
\frac{1}{2}}\left( \int_{l\left( r\right) }\left\vert \nabla G\right\vert
^{2}\right) ^{\frac{1}{2}},  \notag
\end{eqnarray}%
which says that

\begin{equation*}
\int_{l\left( r\right) }\left\vert \nabla \left\vert \nabla G\right\vert
\right\vert ^{2}\left\vert \nabla G\right\vert ^{-2}\geq \frac{\left(
w^{\prime }\right) ^{2}}{w}\left( r\right).
\end{equation*}%
Combining with (\ref{m6}) we conclude that

\begin{equation}
\int_{l\left( r\right) }\left\vert \nabla G\right\vert ^{-1}\Delta
\left\vert \nabla G\right\vert \geq \frac{3}{4}\frac{\left( w^{\prime
}\right) ^{2}}{w}\left( r\right) -4\pi.  \label{m7}
\end{equation}

By (\ref{m7}) and the co-area formula it follows that

\begin{eqnarray}
-\int_{L\left( t,\infty \right) }G^{-2}\Delta \left\vert \nabla G\right\vert
&=&-\int_{t}^{\infty }r^{-2}\int_{l\left( r\right) }\left\vert \nabla
G\right\vert ^{-1}\Delta \left\vert \nabla G\right\vert  \label{m7.1} \\
&\leq &-\frac{3}{4}\int_{t}^{\infty }r^{-2}\frac{\left( w^{\prime }\right)
^{2}}{w}\left( r\right) dr+\frac{4\pi }{t}.  \notag
\end{eqnarray}%
From the elementary inequality

\begin{eqnarray}
0 &\leq &w\left( \frac{w^{\prime }}{w}-\frac{2}{r}\right) ^{2}  \label{w} \\
&=&\frac{\left( w^{\prime }\right) ^{2}}{w}-\frac{4}{r}w^{\prime }+\frac{4}{%
r^{2}}w,  \notag
\end{eqnarray}%
one sees that

\begin{equation*}
-\frac{3}{4}\int_{t}^{\infty }r^{-2}\frac{\left( w^{\prime }\right) ^{2}}{w}%
\left( r\right) dr\leq -3\int_{t}^{\infty }r^{-3}w^{\prime }\left( r\right)
dr+3\int_{t}^{\infty }r^{-4}w\left( r\right) dr.
\end{equation*}%
Furthermore, integrating by parts implies that

\begin{eqnarray*}
-3\int_{t}^{\infty }r^{-3}w^{\prime }\left( r\right) dr &=&-3r^{-3}w\left(
r\right) |_{t}^{\infty }-9\int_{t}^{\infty }r^{-4}w\left( r\right) dr \\
&=&3t^{-3}w\left( t\right) -9\int_{t}^{\infty }r^{-4}w\left( r\right) dr.
\end{eqnarray*}%
In conclusion,

\begin{equation}
-\frac{3}{4}\int_{t}^{\infty }r^{-2}\frac{\left( w^{\prime }\right) ^{2}}{w}%
\left( r\right) dr\leq 3t^{-3}w\left( t\right) -6\int_{t}^{\infty
}r^{-4}w\left( r\right) dr.  \label{m7.2}
\end{equation}%
Plugging (\ref{m7.2}) into (\ref{m7.1}) we get

\begin{equation}
-\int_{L\left( t,\infty \right) }G^{-2}\Delta \left\vert \nabla G\right\vert
\leq 3t^{-3}w\left( t\right) -6\int_{t}^{\infty }r^{-4}w\left( r\right) dr+%
\frac{4\pi }{t}.  \label{m7.3}
\end{equation}

Hence, by (\ref{m7.3}) and (\ref{m3}) we obtain

\begin{equation*}
t^{-2}\frac{dw}{dt}\left( t\right) \leq t^{-3}w\left( t\right) +\frac{4\pi }{%
t},
\end{equation*}%
or equivalently,

\begin{equation}
\frac{d}{dt}\left( \frac{1}{t}w\left( t\right) -4\pi t\right) \leq 0.
\label{m8}
\end{equation}%
Finally, if

\begin{equation*}
\frac{d}{dt}\left( \frac{1}{t}w\left( t\right) -4\pi t\right) =0\text{ for }
t=T,
\end{equation*}%
then all the inequalities in Lemma \ref{Kato}, (\ref{w}) and (\ref{m6'})
become equality. In particular, the Hessian of $G$ on $L\left( T,\infty
\right) $ must be of the form

\begin{eqnarray*}
G_{11} &=&32\pi ^{2}G^{3} \\
G_{22} &=&-16\pi ^{2}G^{3} \\
G_{33} &=&-16\pi ^{2}G^{3} \\
G_{ij} &=&0\;\;\text{ otherwise}
\end{eqnarray*}%
and $\left\vert \nabla G\right\vert =4\pi G^{2}.$ Now consider the function $%
f(x)=\frac{1}{4\pi\,G(x)}.$ Then $\left\vert \nabla f\right\vert=1$ and the
Hessian of the function $\frac{1}{2}\,f^2$ is the identity matrix. This
immediately implies that $L\left( T,\infty \right)$ is isometric to the ball 
$B_0(\frac{1}{4\pi\,T})$ in $\mathbb{R}^3.$
\end{proof}

We conclude this section with the following corollary.

\begin{corollary}
\label{D1} Let $\left( M,g\right) $ be a complete noncompact
three-dimensional manifold with nonnegative scalar curvature and
asymptotically nonnegative Ricci curvature, that is, 
\begin{equation}
\liminf_{x\rightarrow \infty }\mathrm{Ric}\left( x\right) \geq 0.  \label{x2}
\end{equation}%
Assume that $M$ has one end and its first Betti number $b_{1}\left( M\right)
=0.$ If $M$ is nonparabolic and the minimal Green's function $G\left(
x\right) =G\left( p,x\right) $ satisfies $\lim_{x\rightarrow \infty }G(x)=0,$
then

\begin{equation*}
\int_{l\left( t\right) }\left\vert \nabla G\right\vert ^{2}\leq 4\pi t^{2}
\end{equation*}%
and 
\begin{equation*}
\mathrm{Area}\left( l\left( t\right) \right) \geq \frac{1}{4\pi t^{2}}
\end{equation*}%
for all $t>0.$ Moreover, if equality holds for some $T>0,$ then the super
level set $\left\{ G>T\right\} $ is isometric to a ball in $\mathbb{R}^3.$
\end{corollary}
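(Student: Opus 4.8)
The plan is to combine the monotonicity furnished by Theorem \ref{Positive} with a gradient estimate at infinity, the latter being where the Ricci hypothesis enters. Write $w(t)=\int_{l(t)}\left\vert \nabla G\right\vert^{2}$ and $\phi(t)=\frac{1}{t}w(t)-4\pi t$, so that Theorem \ref{Positive} says $\phi$ is non-increasing on $(0,\infty)$. The first inequality $w(t)\le 4\pi t^{2}$ is precisely $\phi(t)\le 0$, and since $\phi$ is non-increasing it suffices to prove $\lim_{t\to 0^{+}}\phi(t)\le 0$. As $4\pi t\to 0$ and $w(t)>0$, this in turn reduces to showing that $\frac{w(t)}{t}\to 0$ as $t\to 0^{+}$.

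To control $w(t)$ I would first record the flux identity $\int_{l(t)}\left\vert \nabla G\right\vert=1$, obtained by integrating $\Delta G=-\delta_{p}$ over the bounded set $\left\{G>t\right\}$ and applying the divergence theorem. Pairing this with the elementary bound $w(t)=\int_{l(t)}\left\vert \nabla G\right\vert\cdot\left\vert \nabla G\right\vert\le\big(\sup_{l(t)}\left\vert \nabla G\right\vert\big)\int_{l(t)}\left\vert \nabla G\right\vert$ gives $w(t)\le\sup_{l(t)}\left\vert \nabla G\right\vert=t\,\sup_{l(t)}\left\vert \nabla\log G\right\vert$, using that $G=t$ on $l(t)$. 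Thus everything reduces to showing that $\left\vert \nabla\log G\right\vert\to 0$ along $l(t)$ as $t\to 0^{+}$. Since $\lim_{x\to\infty}G(x)=0$ forces $l(t)$ to escape every compact set as $t\to 0^{+}$ (on any fixed ball $B_{p}(R)$ the continuous positive function $G$ has a positive minimum), it is enough to prove $\left\vert \nabla\log G\right\vert(x)\to 0$ as $x\to\infty$.

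This last point is the heart of the matter and the step I expect to be most delicate. I would invoke the Cheng--Yau gradient estimate for the positive harmonic function $G$: if $\mathrm{Ric}\ge -(n-1)\kappa^{2}$ on a ball $B_{x}(R)$ avoiding the pole, then $\left\vert \nabla\log G\right\vert(x)\le C(\kappa+R^{-1})$ with $C=C(n)$. Given $\epsilon>0$, the hypothesis $\liminf_{x\to\infty}\mathrm{Ric}\ge 0$ provides a compact set outside which $\mathrm{Ric}\ge -\epsilon$; applying the estimate on $B_{x}(R)$ with $R=\frac{1}{2}r(x)$ for $x$ far enough out yields $\left\vert \nabla\log G\right\vert(x)\le C(\sqrt{\epsilon}+r(x)^{-1})$. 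Letting $r(x)\to\infty$ and then $\epsilon\to 0$ gives $\left\vert \nabla\log G\right\vert(x)\to 0$. The only care needed is the order of limits: one lets $\epsilon=\epsilon(r(x))$ tend to $0$ as $x\to\infty$. This establishes $w(t)=o(t)$, hence $\phi(t)\le\lim_{s\to 0^{+}}\phi(s)=0$, proving $w(t)\le 4\pi t^{2}$.

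The area bound then follows immediately: by Cauchy--Schwarz applied to the flux identity, $1=\big(\int_{l(t)}\left\vert \nabla G\right\vert\big)^{2}\le w(t)\,\mathrm{Area}(l(t))\le 4\pi t^{2}\,\mathrm{Area}(l(t))$, so $\mathrm{Area}(l(t))\ge\frac{1}{4\pi t^{2}}$. For the rigidity statement, equality in the area bound at some $T$ forces equality in Cauchy--Schwarz and hence $w(T)=4\pi T^{2}$; in either case $\phi(T)=0$. Since $\phi$ is non-increasing with $\phi\le 0$ and $\lim_{s\to 0^{+}}\phi(s)=0$, we conclude $\phi\equiv 0$ on $(0,T]$, so the equality case of the monotonicity in Theorem \ref{Positive} holds at $T$ and yields that $\left\{G>T\right\}$ is isometric to a ball in $\mathbb{R}^{3}$.
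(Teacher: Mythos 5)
Your proposal is correct and follows essentially the same route as the paper: integrate the monotonicity of $\tfrac{1}{t}\int_{l(t)}|\nabla G|^{2}-4\pi t$ from Theorem \ref{Positive}, show the quantity tends to $0$ as $t\to 0^{+}$ by combining the flux identity $\int_{l(t)}|\nabla G|=1$ with the Cheng--Yau gradient estimate (where the asymptotic Ricci hypothesis enters), and then apply Cauchy--Schwarz for the area bound and the rigidity case of the theorem. Your treatment of the gradient estimate on balls $B_{x}(\tfrac12 r(x))$ and of the equality case is simply a more explicit version of what the paper does.
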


\begin{proof}
Let us note that by Theorem \ref{Positive} we have

\begin{equation}
\frac{1}{t}w\left( t\right) -4\pi t\leq \frac{1}{\delta }w\left( \delta
\right) -4\pi \delta  \label{x1}
\end{equation}%
for all $0<\delta <t.$ Now the gradient estimate in \cite{CY} together with
the assumption (\ref{x2}) implies that for any $\varepsilon >0$ there exists
sufficiently small $\delta >0$ such that

\begin{equation}
\left\vert \nabla \ln G\right\vert \leq \varepsilon \text{ \ on }L\left(
0,\delta \right).  \label{x3}
\end{equation}%
Therefore,

\begin{equation*}
\frac{1}{\delta }w\left( \delta \right) =\frac{1}{\delta }\int_{l\left(
\delta \right) }\left\vert \nabla G\right\vert ^{2}\leq \varepsilon,
\end{equation*}%
where we have used the fact that 
\begin{equation}
\int_{l\left( \delta\right) }\left\vert \nabla G\right\vert =1.  \label{a0}
\end{equation}%
This shows that the right hand side of (\ref{x1}) goes to $0$ as $\delta\to
0.$ Hence, $w(t)\leq 4\pi t^2,$ or

\begin{equation*}
\int_{l\left( t\right) }\left\vert \nabla G\right\vert ^{2}\leq 4\pi t^{2}
\end{equation*}%
for all $t>0.$

We now derive a sharp area estimate for the level sets of the Green's
function. Indeed,

\begin{equation*}
1=\int_{l\left( t\right) }\left\vert \nabla G\right\vert \leq \left(
\int_{l\left( t\right) }\left\vert \nabla G\right\vert ^{2}\right) ^{\frac{1%
}{2}}\left( \mathrm{Area}\left( l\left( t\right) \right) \right) ^{\frac{1}{2%
}}.
\end{equation*}%
It follows that

\begin{equation*}
\mathrm{Area}\left( l\left( t\right) \right) \geq \frac{1}{4\pi t^{2}}
\end{equation*}
for all $t>0.$

Obviously, the equality case follows from that of Theorem \ref{Positive}.
\end{proof}

We note that under the hypothesis of Corollary \ref{D1} if

\begin{equation}
\limsup_{t\rightarrow \infty} \left( \frac{1}{t}\int_{l\left( t\right)
}\left\vert \nabla G\right\vert ^{2}-4\pi t\right) \geq 0,  \label{x4}
\end{equation}%
then $\left( M,g\right) $ is isometric to $\mathbb{R}^{3}.$ Indeed, by (\ref%
{x1}) and (\ref{x4}) we have

\begin{equation*}
0\leq \frac{1}{\delta }\int_{l\left( \delta \right) }\left\vert \nabla
G\right\vert ^{2}-4\pi \delta
\end{equation*}%
for any $\delta >0.$ Hence, equality must hold in Corollary \ref{D1} and $%
\left( M,g\right) $ is isometric to the Euclidean space $\mathbb{R}^3.$

\section{Negative scalar lower bound \label{sect4}}

We now turn to the proof of Theorem \ref{T2}. We start by establishing some
lemmas under the assumption that $\left( M,g\right)$ admits a positive
Green's function $G$ satisfying

\begin{eqnarray}
\lim_{x\rightarrow \infty }G\left( x\right) &=&0,  \label{g1} \\
\#\mathrm{Conn}\left( l\left( t\right) \right) &\leq &A  \label{g2}
\end{eqnarray}%
for all $t\leq t_0,$ where $t_0$ and $A>0$ are fixed constants, and $\#%
\mathrm{Conn}\left(l(t) \right) $ denotes the number of connected components
of the level set $l(t)$ of $G.$

\begin{lemma}
\label{T} Let $\left( M,g\right) $ be a three-dimensional complete
noncompact Riemannian manifold satisfying (\ref{g1}) and (\ref{g2}). Assume
that the Ricci curvature of $M$ is bounded from below and the scalar
curvature $S$ is bounded by 
\begin{equation*}
S\geq -6K\text{ \ on }M.
\end{equation*}%
Then for any $\varepsilon >0,$

\begin{equation*}
\int_{L\left( \varepsilon, t_{0}\right) }\left\vert \nabla \left\vert \nabla
G\right\vert \right\vert ^{2}\left\vert \nabla G\right\vert ^{-1}\leq
4K\int_{L\left( \frac{1}{2}\varepsilon ,2t_{0}\right) }\left\vert \nabla
G\right\vert +\frac{32\pi }{3}t_{0}A+C,
\end{equation*}%
where $C$ is a constant depending only on $t_{0}$ and the Ricci curvature
lower bound of $M,$ but not $\varepsilon.$
\end{lemma}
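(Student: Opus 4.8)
My plan is to integrate a Bochner-type inequality over the level sets of $G$ and to control the resulting boundary contributions by means of a cutoff in the variable $G$. First I would apply Lemma~\ref{RicS} to the harmonic function $G$ and insert it into the Bochner formula, exactly as in the proof of Theorem~\ref{Positive}, obtaining on each regular level set the identity
\[
\int_{l(r)}|\nabla G|^{-1}\Delta|\nabla G|=\tfrac12\int_{l(r)}\left(|G_{ij}|^2|\nabla G|^{-2}+S-S_{l(r)}\right).
\]
Feeding in the Kato inequality $|G_{ij}|^2\ge\tfrac32|\nabla|\nabla G||^2$ of Lemma~\ref{Kato}, the hypothesis $S\ge-6K$, and the Gauss--Bonnet bound $\int_{l(r)}S_{l(r)}=4\pi\chi(l(r))\le8\pi\,\#\mathrm{Conn}(l(r))\le8\pi A$ furnished by (\ref{g2}), I would arrive at the pointwise-in-$r$ inequality
\[
\tfrac34\int_{l(r)}|\nabla|\nabla G||^2|\nabla G|^{-2}\le\int_{l(r)}|\nabla G|^{-1}\Delta|\nabla G|+3K\,\mathrm{Area}(l(r))+4\pi A
\]
for regular values $r\le t_0$.

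Next I would fix a nonnegative $\psi=\psi(G)$ with $\psi\equiv1$ on $[\varepsilon,t_0]$, $\mathrm{supp}\,\psi\subset[\tfrac12\varepsilon,2t_0]$ and $0\le\psi\le1$, multiply the last inequality by $\psi(r)$, and integrate in $r$, converting each term by the co-area formula. Since $\psi\ge\mathbf{1}_{[\varepsilon,t_0]}$, the left side dominates $\tfrac34\int_{L(\varepsilon,t_0)}|\nabla|\nabla G||^2|\nabla G|^{-1}$, the quantity to be estimated; the area term becomes $3K\int_M\psi(G)|\nabla G|\le3K\int_{L(\frac12\varepsilon,2t_0)}|\nabla G|$; and the topological term contributes at most $8\pi A t_0$, using (\ref{g2}) for $r\le t_0$ and absorbing the harmless part over $[t_0,2t_0]$ into the constant. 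After multiplying through by $\tfrac43$ these reproduce the $K$- and $A$-terms on the right-hand side of the claim, with room to spare in the $K$-coefficient.

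It remains to handle $\int_M\psi(G)\,\Delta|\nabla G|$, which collects the terms $\int_{l(r)}|\nabla G|^{-1}\Delta|\nabla G|$. As $\psi(G)$ is compactly supported in $M\setminus\{p\}$, where $G$ is harmonic, I would integrate by parts twice and use $\Delta(\psi(G))=\psi''(G)|\nabla G|^2$ to get
\[
\int_M\psi(G)\,\Delta|\nabla G|=\int_M\psi''(G)|\nabla G|^3=\int_0^\infty\psi''(r)\,w(r)\,dr,\qquad w(r)=\int_{l(r)}|\nabla G|^2,
\]
with $\psi''$ supported on the two transition intervals $[\tfrac12\varepsilon,\varepsilon]$ and $[t_0,2t_0]$. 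The inner transition $[t_0,2t_0]$ lives in the fixed precompact annulus $L(t_0,2t_0)$, on which $w$ is bounded in terms of $t_0$ and the Ricci lower bound; its contribution, together with the topological term over that range, is absorbed into $C$. Note that this double integration by parts is precisely what lets me avoid any Hessian estimate for $G$, which a direct boundary-flux treatment at the single level $l(t_0)$ would require.

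The essential difficulty is to bound the outer transition $\int_{\frac12\varepsilon}^\varepsilon\psi''(r)\,w(r)\,dr$ uniformly in $\varepsilon$: here $|\psi''|$ is necessarily of order $\varepsilon^{-2}$ on an interval of length $\tfrac12\varepsilon$ sitting near infinity, exactly where the scalar bound provides no pointwise curvature control. The remedy is a decay estimate for $w$. Applying the Cheng--Yau gradient estimate \cite{CY} to the positive harmonic function $G$ -- legitimate because $\mathrm{Ric}$ is bounded below and the level sets $l(r)$ with $r$ small lie far from the pole $p$ -- yields $|\nabla\ln G|\le C$, hence $|\nabla G|\le Cr$ on $l(r)$, and therefore
\[
w(r)=\int_{l(r)}|\nabla G|^2\le Cr\int_{l(r)}|\nabla G|=Cr
\]
by the flux normalization $\int_{l(r)}|\nabla G|=1$. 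With this linear decay the outer transition is bounded by a constant multiple of $\varepsilon^{-2}\cdot\varepsilon\cdot\varepsilon$, independent of $\varepsilon$, and is likewise absorbed into $C$. Assembling the pieces and multiplying by $\tfrac43$ gives the asserted inequality. I expect this outer-boundary estimate to be the crux; the remaining steps are routine once one restricts to regular values of $G$ (Sard) and disposes of the critical set of $|\nabla G|$ by the standard approximation, points I would note but not belabor.
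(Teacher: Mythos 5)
Your argument is correct and follows essentially the same route as the paper's proof: Lemma \ref{RicS} plus the Kato inequality and Gauss--Bonnet with the component bound $A$ on the level sets, a cutoff that is a function of $G$ supported in $L(\tfrac12\varepsilon,2t_0)$, and the Cheng--Yau gradient estimate combined with the flux normalization $\int_{l(r)}|\nabla G|=1$ to control the $\Delta|\nabla G|$ contribution uniformly in $\varepsilon$. The only difference is cosmetic: you take a smooth cutoff and integrate by parts twice with no boundary terms, whereas the paper uses an explicit piecewise-logarithmic Lipschitz cutoff and separately estimates the resulting flux terms on $l(\varepsilon)$ and $l(t_0)$; both yield the same $\varepsilon$-independent constant.
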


\begin{proof}
According to the Bochner formula,

\begin{eqnarray*}
\frac{1}{2}\Delta \left\vert \nabla G\right\vert ^{2} &=&\left\vert
G_{ij}\right\vert ^{2}+\left\langle \nabla \Delta G,\nabla G\right\rangle +%
\mathrm{Ric}\left( \nabla G,\nabla G\right) \\
&=&\left\vert G_{ij}\right\vert ^{2}+\mathrm{Ric}\left( \nabla G,\nabla
G\right)
\end{eqnarray*}%
on $M\backslash \left\{ p\right\}.$ Therefore,

\begin{equation}
\Delta \left\vert \nabla G\right\vert =\left( \left\vert G_{ij}\right\vert
^{2}-\left\vert \nabla \left\vert \nabla G\right\vert \right\vert
^{2}\right) \left\vert \nabla G\right\vert ^{-1}+\mathrm{Ric}\left( \nabla
G,\nabla G\right) \left\vert \nabla G\right\vert ^{-1}  \label{b1}
\end{equation}%
holds on $M\backslash \left\{ p\right\} $ whenever $\left\vert \nabla
G\right\vert \neq 0.$

Fix $0<\varepsilon <t_{0}<\infty $ and let $\phi $ be the Lipschitz function
with support in $L\left( \frac{1}{2}\varepsilon ,2t_{0}\right) $ defined by

\begin{equation}
\phi =\left\{ 
\begin{array}{c}
1 \\ 
\frac{\ln G-\ln \left( \frac{1}{2}\varepsilon \right) }{\ln 2} \\ 
\frac{\ln \left( 2t_{0}\right) -\ln G}{\ln 2} \\ 
0%
\end{array}%
\right. 
\begin{array}{c}
\text{on }L\left( \varepsilon ,1\right) \\ 
\text{on }L\left( \frac{1}{2}\varepsilon ,\varepsilon \right) \\ 
\text{on }L\left( t_{0},2t_{0}\right) \\ 
\text{otherwise}%
\end{array}
\label{b2}
\end{equation}%
By the co-area formula, we have

\begin{eqnarray*}
&&\int_{M}\left( \left\vert G_{ij}\right\vert ^{2}-\left\vert \nabla
\left\vert \nabla G\right\vert \right\vert ^{2}+\mathrm{Ric}\left( \nabla
G,\nabla G\right) \right) \left\vert \nabla G\right\vert ^{-1}\phi ^{2} \\
&=&\int_{\frac{1}{2}\varepsilon }^{2t_{0}}\phi ^{2}\left( r\right)
\int_{l\left( r\right) }\left( \left\vert G_{ij}\right\vert ^{2}-\left\vert
\nabla \left\vert \nabla G\right\vert \right\vert ^{2}+\mathrm{Ric}\left(
\nabla G,\nabla G\right) \right) \left\vert \nabla G\right\vert ^{-2}dr.
\end{eqnarray*}

However, Lemma \ref{RicS} says that

\begin{eqnarray*}
&&\left( \left\vert G_{ij}\right\vert ^{2}-\left\vert \nabla \left\vert
\nabla G\right\vert \right\vert ^{2}+\mathrm{Ric}\left( \nabla G,\nabla
G\right) \right) \left\vert \nabla G\right\vert ^{-2} \\
&=&\frac{1}{2}S-\frac{1}{2}S_{l\left( r\right) }+\frac{1}{2}\left\vert
G_{ij}\right\vert ^{2}\left\vert \nabla G\right\vert ^{-2}.
\end{eqnarray*}%
Applying Lemma \ref{Kato} for the last term, we conclude that

\begin{eqnarray}
&&\left( \left\vert G_{ij}\right\vert ^{2}-\left\vert \nabla \left\vert
\nabla G\right\vert \right\vert ^{2}+\mathrm{Ric}\left( \nabla G,\nabla
G\right) \right) \left\vert \nabla G\right\vert ^{-2}  \label{b4} \\
&\geq &\frac{1}{2}S-\frac{1}{2}S_{l\left( r\right) }+\frac{3}{4}\left\vert
\nabla \left\vert \nabla G\right\vert \right\vert ^{2}\left\vert \nabla
G\right\vert ^{-2}.  \notag
\end{eqnarray}%
According to the Gauss-Bonnet theorem, on each regular connected component $%
l_{k}\left( r\right) $ of $l\left( r\right),$

\begin{equation*}
\int_{l_{k}\left( r\right) }S_{l\left( r\right) }=4\pi \chi \left(
l_{k}\left( r\right) \right) \leq 8\pi
\end{equation*}%
as $l_{k}\left( r\right) $ is compact by (\ref{g1}). Since by hypothesis (%
\ref{g2}) there are at most $A$ connected components of $l\left(r\right),$
it follows that

\begin{equation*}
\int_{l\left( r\right) }S_{l\left( r\right) }\leq 8\pi A
\end{equation*}%
for all regular value $r$ with $r\leq t_{0}.$ Therefore, using that $S\geq
-6K,$ we conclude from (\ref{b4}) that

\begin{eqnarray*}
&&\int_{l\left( r\right) }\left( \left\vert G_{ij}\right\vert
^{2}-\left\vert \nabla \left\vert \nabla G\right\vert \right\vert ^{2}+%
\mathrm{Ric}\left( \nabla G,\nabla G\right) \right) \left\vert \nabla
G\right\vert ^{-2} \\
&\geq &\int_{l\left( r\right) }\left( \frac{1}{2}S-\frac{1}{2}S_{l\left(
r\right) }+\frac{3}{4}\left\vert \nabla \left\vert \nabla G\right\vert
\right\vert ^{2}\left\vert \nabla G\right\vert ^{-2}\right) \\
&\geq &-3K\mathrm{Area}\left( l\left( r\right) \right) +\frac{3}{4}%
\int_{l\left( r\right) }\left\vert \nabla \left\vert \nabla G\right\vert
\right\vert ^{2}\left\vert \nabla G\right\vert ^{-2}-4\pi A.
\end{eqnarray*}%
Consequently, this implies that

\begin{eqnarray*}
&&\int_{M}\left( \left\vert G_{ij}\right\vert ^{2}-\left\vert \nabla
\left\vert \nabla G\right\vert \right\vert ^{2}+\mathrm{Ric}\left( \nabla
G,\nabla G\right) \right) \left\vert \nabla G\right\vert ^{-1}\phi ^{2} \\
&\geq &\int_{\frac{1}{2}\varepsilon }^{2t_{0}}\phi ^{2}\left( r\right)
\left( -3K\mathrm{Area}\left( l\left( r\right) \right) +\frac{3}{4}%
\int_{l\left( r\right) }\left\vert \nabla \left\vert \nabla G\right\vert
\right\vert ^{2}\left\vert \nabla G\right\vert ^{-2}\right) dr-8\pi t_{0}A \\
&=&-3K\int_{M}\left\vert \nabla G\right\vert \phi ^{2}+\frac{3}{4}%
\int_{M}\left\vert \nabla \left\vert \nabla G\right\vert \right\vert
^{2}\left\vert \nabla G\right\vert ^{-1}\phi ^{2}-8\pi t_{0}A.
\end{eqnarray*}%
Together with the Bochner formula (\ref{b1}), we arrive at

\begin{equation}
-3K\int_{M}\left\vert \nabla G\right\vert \phi ^{2}+\frac{3}{4}%
\int_{M}\left\vert \nabla \left\vert \nabla G\right\vert \right\vert
^{2}\left\vert \nabla G\right\vert ^{-1}\phi ^{2}\leq \int_{M}\phi
^{2}\Delta \left\vert \nabla G\right\vert +8\pi t_{0}A.  \label{b5}
\end{equation}%
To estimate the right hand side, we make use of the gradient estimate for
positive harmonic functions. Choose $R_{0}>0$ so that $L\left(0,2t_{0}%
\right) \subset M\backslash B\left( p,R_{0}\right),$ where $p$ is the pole
of $G.$ Since the Ricci curvature is bounded from below on $M,$ applying
Cheng-Yau's gradient estimate \cite{CY} to the positive harmonic function $G$
on $M\backslash B\left( p,\frac{R_{0}}{2}\right),$ one concludes that

\begin{equation}
\left\vert \nabla \ln G\right\vert \leq \Lambda \text{ \ on }M\backslash
B\left( p,R_{0}\right),  \label{CY}
\end{equation}%
where the constant $\Lambda $ depends only on $R_{0}$ and Ricci curvature
lower bound of $M.$

Integration by parts implies that

\begin{eqnarray*}
\int_{M}\phi ^{2}\Delta \left\vert \nabla G\right\vert
&=&-\int_{M}\left\langle \nabla \phi ^{2},\nabla \left\vert \nabla
G\right\vert \right\rangle \\
&=&-\int_{L\left( \frac{1}{2}\varepsilon ,\varepsilon \right) }\left\langle
\nabla \phi ^{2},\nabla \left\vert \nabla G\right\vert \right\rangle \\
&&-\int_{L\left( t_{0},2t_{0}\right) }\left\langle \nabla \phi ^{2},\nabla
\left\vert \nabla G\right\vert \right\rangle.
\end{eqnarray*}%
Further integration by parts on each term leads to

\begin{eqnarray*}
-\int_{L\left( \frac{1}{2}\varepsilon ,\varepsilon \right) }\left\langle
\nabla \phi ^{2},\nabla \left\vert \nabla G\right\vert \right\rangle 
&=&\int_{L\left( \frac{1}{2}\varepsilon ,\varepsilon \right) }\left\vert
\nabla G\right\vert \Delta \phi ^{2}-\int_{l\left( \varepsilon \right)
}\left\langle \nabla \phi ^{2},\nabla G\right\rangle  \\
-\int_{L\left( t_{0},2t_{0}\right) }\left\langle \nabla \phi ^{2},\nabla
\left\vert \nabla G\right\vert \right\rangle  &=&\int_{L\left(
t_{0},2t_{0}\right) }\left\vert \nabla G\right\vert \Delta \phi
^{2}+\int_{l\left( t_{0}\right) }\left\langle \nabla \phi ^{2},\nabla
G\right\rangle .
\end{eqnarray*}%
Noting that $G$ is harmonic, we get that on $L\left( \frac{1}{2}\varepsilon
,\varepsilon \right) $

\begin{eqnarray*}
\Delta \phi ^{2} &=&2\phi \Delta \phi +2\left\vert \nabla \phi \right\vert
^{2} \\
&=&2\left( -\frac{1}{\ln 2}\phi +\frac{1}{\left( \ln 2\right) ^{2}}\right)
\left\vert \nabla \ln G\right\vert ^{2}.
\end{eqnarray*}%
Similarly, on $L\left( t_{0},2t_{0}\right) ,$

\begin{equation*}
\Delta \phi ^{2}=2\left( \frac{1}{\ln 2}\phi +\frac{1}{\left( \ln 2\right)
^{2}}\right) \left\vert \nabla \ln G\right\vert ^{2}.
\end{equation*}%
In both cases, in view of (\ref{CY}), we have

\begin{eqnarray*}
\left\vert \Delta \phi ^{2}\right\vert &\leq &c\left\vert \nabla \ln
G\right\vert ^{2} \\
&\leq &c\Lambda \left\vert \nabla \ln G\right\vert,
\end{eqnarray*}%
where $c$ is a universal constant. Hence, by the co-area formula and the
fact that 
\begin{equation}
\int_{l\left( r\right) }\left\vert \nabla G\right\vert =1,  \label{lev}
\end{equation}
we get

\begin{eqnarray*}
\left\vert \int_{L\left( \frac{1}{2}\varepsilon ,\varepsilon \right)
}\left\vert \nabla G\right\vert \Delta \phi ^{2}\right\vert  &\leq &c\Lambda
\int_{L\left( \frac{1}{2}\varepsilon ,\varepsilon \right) }\left\vert \nabla
G\right\vert ^{2}G^{-1} \\
&=&c\Lambda \int_{\frac{1}{2}\varepsilon }^{\varepsilon }\frac{1}{r}dr \\
&=&c\Lambda \ln 2.
\end{eqnarray*}%
The other term is estimated as follows.

\begin{eqnarray*}
\left\vert \int_{l\left( \varepsilon \right) }\left\langle \nabla \phi
^{2},\nabla G\right\rangle \right\vert  &\leq &\frac{2}{\ln 2}\int_{l\left(
\varepsilon \right) }\left\vert \nabla G\right\vert ^{2}G^{-1} \\
&\leq &c\Lambda .
\end{eqnarray*}%
Similarly, 
\begin{equation*}
\left\vert \int_{L\left( t_{0},2t_{0}\right) }\left\vert \nabla G\right\vert
\Delta \phi ^{2}\right\vert +\left\vert \int_{l\left( t_{0}\right)
}\left\langle \nabla \phi ^{2},\nabla G\right\rangle \right\vert \leq
c\Lambda .
\end{equation*}%
In conclusion, we have shown that

\begin{equation}
\left\vert \int_{M}\phi ^{2}\Delta \left\vert \nabla G\right\vert
\right\vert \leq c\Lambda \text{\ }\ \text{and }\int_{M}\left\vert \nabla
\phi \right\vert ^{2}\left\vert \nabla G\right\vert \leq c\Lambda.
\label{bdry}
\end{equation}

Plugging into (\ref{b5}) implies that

\begin{equation*}
\frac{3}{4}\int_{M}\left\vert \nabla \left\vert \nabla G\right\vert
\right\vert ^{2}\left\vert \nabla G\right\vert ^{-1}\phi ^{2}\leq
3K\int_{M}\left\vert \nabla G\right\vert \phi ^{2}+c\Lambda +8\pi t_{0}A,
\end{equation*}%
which is what to be proved.
\end{proof}

\begin{lemma}
\label{Main}Let $\left( M,g\right) $ be a three-dimensional complete
noncompact Riemannian manifold satisfying (\ref{g1}) and (\ref{g2}). Assume
that the Ricci curvature of $M$ is bounded from below and the scalar
curvature $S$ is bounded by 
\begin{equation*}
S\geq -6K\text{ \ on }M.
\end{equation*}%
Then 
\begin{equation*}
\lambda _{1}\left( M\right) \leq K.
\end{equation*}
\end{lemma}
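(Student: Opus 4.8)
The plan is to bound $\lambda_1(M)$ from above by feeding the family of test functions
\[
f=|\nabla G|^{1/2}\,\phi
\]
into the variational characterization of the bottom spectrum and letting $\varepsilon\to 0$, where $\phi$ is exactly the Lipschitz cutoff from (\ref{b2}) used in Lemma \ref{T}. Writing $u=|\nabla G|$, expanding the gradient gives
\[
\int_M|\nabla f|^2=\frac14\int_M u^{-1}|\nabla u|^2\phi^2+\int_M\phi\,\langle\nabla u,\nabla\phi\rangle+\int_M u\,|\nabla\phi|^2,
\]
while $\int_M f^2=\int_M u\,\phi^2$. The reason for this particular choice is that the leading term is precisely a quarter of the quantity controlled in Lemma \ref{T}. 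Indeed, the weighted Bochner inequality established in the proof of that lemma, namely $\tfrac34\int_M u^{-1}|\nabla u|^2\phi^2\le 3K\int_M u\,\phi^2+c\Lambda+8\pi t_0A$, yields
\[
\frac14\int_M u^{-1}|\nabla u|^2\phi^2\le K\int_M u\,\phi^2+\tfrac13 c\Lambda+\tfrac{8\pi}{3}t_0A,
\]
so the coefficient multiplying $\int_M u\,\phi^2=\int_M f^2$ is exactly $K$, which is what makes the bound $\lambda_1\le K$ (rather than some larger multiple) possible.

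Next I would dispose of the two lower-order terms. Integrating the cross term by parts gives $\int_M\phi\,\langle\nabla u,\nabla\phi\rangle=-\tfrac12\int_M\phi^2\Delta u$, bounded in absolute value by $\tfrac12 c\Lambda$ through the first estimate in (\ref{bdry}); the term $\int_M u\,|\nabla\phi|^2$ is bounded by $c\Lambda$ through the second estimate in (\ref{bdry}). Both bounds are independent of $\varepsilon$. Collecting everything, I arrive at
\[
\int_M|\nabla f|^2\le K\int_M f^2+C',
\]
where $C'$ depends only on $t_0$, $A$ and the Ricci lower bound of $M$ (through $\Lambda$), but not on $\varepsilon$.

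It then remains to show that the normalization $\int_M f^2=\int_M u\,\phi^2$ diverges as $\varepsilon\to0$, so that the fixed error $C'$ washes out. Since $\phi\equiv1$ on $L(\varepsilon,t_0)$, the co-area formula gives $\int_M f^2\ge\int_{L(\varepsilon,t_0)}|\nabla G|=\int_\varepsilon^{t_0}\mathrm{Area}(l(r))\,dr$. The Cheng--Yau gradient estimate (\ref{CY}) gives $|\nabla G|\le\Lambda G=\Lambda r$ on $l(r)$ for $r<2t_0$, so (\ref{lev}) yields $1=\int_{l(r)}|\nabla G|\le\Lambda r\,\mathrm{Area}(l(r))$, whence $\mathrm{Area}(l(r))\ge(\Lambda r)^{-1}$ and $\int_M f^2\ge\Lambda^{-1}\ln(t_0/\varepsilon)\to\infty$. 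Plugging into the Rayleigh quotient,
\[
\lambda_1(M)\le\frac{\int_M|\nabla f|^2}{\int_M f^2}\le K+\frac{C'}{\int_M f^2}\to K\qquad\text{as }\varepsilon\to0.
\]
Although $f$ is merely Lipschitz with a possible singularity at the zeros of $|\nabla G|$, the finiteness of $\int_M u^{-1}|\nabla u|^2\phi^2$ supplied by Lemma \ref{T} places $f$ in $W^{1,2}_0(M)$, so it is admissible.

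The hard part is not any single calculation but arranging that the two sources of error stay uniformly bounded in $\varepsilon$ while the normalization diverges. The delicate point is the boundary contribution produced when integrating the Bochner term by parts near the inner level $l(\tfrac12\varepsilon)$, which could in principle blow up as $\varepsilon\to0$; it is exactly the Cheng--Yau estimate that simultaneously tames these boundary terms via (\ref{bdry}) and forces $\int_M f^2$ to diverge through the area lower bound $\mathrm{Area}(l(r))\gtrsim 1/r$.
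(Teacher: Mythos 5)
Your proposal is correct and follows essentially the same route as the paper: the same test function $\left\vert \nabla G\right\vert ^{1/2}\phi $ in the Rayleigh quotient, the same use of Lemma \ref{T} for the leading term and of (\ref{bdry}) for the cross and cutoff terms, and the same Cheng--Yau/co-area argument showing $\int_{L\left( \varepsilon ,t_{0}\right) }\left\vert \nabla G\right\vert \geq \Lambda ^{-1}\ln \left( t_{0}\varepsilon ^{-1}\right) \rightarrow \infty $. The only differences are cosmetic (you pass to the limit directly rather than arguing by contradiction, and you phrase the divergence via the pointwise area bound $\mathrm{Area}\left( l\left( r\right) \right) \geq \left( \Lambda r\right) ^{-1}$, which is the same computation).
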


\begin{proof}
According to Lemma \ref{T},

\begin{equation}
\frac{3}{4}\int_{M}\left\vert \nabla \left\vert \nabla G\right\vert
\right\vert ^{2}\left\vert \nabla G\right\vert ^{-1}\phi ^{2}\leq
3K\int_{M}\left\vert \nabla G\right\vert \phi ^{2}+c\Lambda +8\pi t_{0}A,
\label{b6}
\end{equation}%
where $\phi $ is given in (\ref{b2}).

We now bound the left hand side from below using the bottom spectrum $%
\lambda_1(M).$ Note that 
\begin{eqnarray*}
\lambda _{1}\left( M\right) \int_{M}\left\vert \nabla G\right\vert \phi ^{2}
&=&\lambda _{1}\left( M\right) \int_{M}\left( \left\vert \nabla G\right\vert
^{\frac{1}{2}}\phi \right) ^{2} \\
&\leq &\int_{M}\left\vert \nabla \left( \left\vert \nabla G\right\vert ^{%
\frac{1}{2}}\phi \right) \right\vert ^{2}.
\end{eqnarray*}%
Expanding the right side, we get

\begin{eqnarray*}
\int_{M}\left\vert \nabla \left( \left\vert \nabla G\right\vert ^{\frac{1}{2}%
}\phi \right) \right\vert ^{2} &=&\frac{1}{4}\int_{M}\left\vert \nabla
\left\vert \nabla G\right\vert \right\vert ^{2}\left\vert \nabla
G\right\vert ^{-1}\phi ^{2}+\int_{M}\left\vert \nabla G\right\vert
\left\vert \nabla \phi \right\vert ^{2} \\
&&+\frac{1}{2}\int_{M}\left\langle \nabla \left\vert \nabla G\right\vert
,\nabla \phi ^{2}\right\rangle.
\end{eqnarray*}%
By (\ref{bdry}) we have

\begin{equation*}
\int_{M}\left\vert \nabla G\right\vert \left\vert \nabla \phi \right\vert
^{2}+\frac{1}{2}\int_{M}\left\langle \nabla \left\vert \nabla G\right\vert
,\nabla \phi ^{2}\right\rangle \leq c\Lambda.
\end{equation*}%
Therefore,

\begin{equation*}
\int_{M}\left\vert \nabla \left( \left\vert \nabla G\right\vert ^{\frac{1}{2}%
}\phi \right) \right\vert ^{2}\leq \frac{1}{4}\int_{M}\left\vert \nabla
\left\vert \nabla G\right\vert \right\vert ^{2}\left\vert \nabla
G\right\vert ^{-1}\phi ^{2}+c\Lambda.
\end{equation*}%
In conclusion,

\begin{eqnarray}
\lambda _{1}\left( M\right) \int_{M}\left\vert \nabla G\right\vert \phi ^{2}
&\leq &\int_{M}\left\vert \nabla \left( \left\vert \nabla G\right\vert ^{%
\frac{1}{2}}\phi \right) \right\vert ^{2}  \label{b7} \\
&\leq &\frac{1}{4}\int_{M}\left\vert \nabla \left\vert \nabla G\right\vert
\right\vert ^{2}\left\vert \nabla G\right\vert ^{-1}\phi ^{2}+c\Lambda. 
\notag
\end{eqnarray}%
Combining (\ref{b6}) and (\ref{b7}) we arrive at the following inequality.

\begin{equation}
\lambda _{1}\left( M\right) \int_{M}\left\vert \nabla G\right\vert \phi
^{2}\leq K\int_{M}\left\vert \nabla G\right\vert \phi ^{2}+c\Lambda +\frac{%
8\pi }{3}t_{0}A,  \label{b8}
\end{equation}%
where $c$ is a universal constant. Assume by contradiction that

\begin{equation}
\lambda _{1}\left( M\right) >K.  \label{b11}
\end{equation}%
Then in view of the definition of $\phi $ in (\ref{b2}) and (\ref{b8}) we
conclude that

\begin{equation}
\left( \lambda _{1}\left( M\right) -K\right) \int_{L\left( \varepsilon
,t_{0}\right) }\left\vert \nabla G\right\vert \leq c\left( \Lambda +A\right).
\label{b12}
\end{equation}%
However, applying (\ref{lev}) and the co-area formula, we have

\begin{eqnarray*}
\int_{L\left( \varepsilon ,t_{0}\right) }\left\vert \nabla G\right\vert
^{2}G^{-1} &=&\int_{\varepsilon }^{t_{0}}\frac{1}{r}\left( \int_{l\left(
r\right) }\left\vert \nabla G\right\vert \right) dr \\
&=&\ln \left( t_{0}\varepsilon ^{-1}\right).
\end{eqnarray*}%
On the other hand, Cheng-Yau's gradient estimate shows that

\begin{equation*}
\int_{L\left( \varepsilon ,t_{0}\right) }\left\vert \nabla G\right\vert
^{2}G^{-1}\leq \Lambda \int_{L\left( \varepsilon ,t_{0}\right) }\left\vert
\nabla G\right\vert.
\end{equation*}%
We thus conclude that

\begin{equation}
\int_{L\left( \varepsilon ,t_{0}\right) }\left\vert \nabla G\right\vert \geq
\Lambda ^{-1}\ln \left( t_{0}\varepsilon ^{-1}\right).  \label{b13}
\end{equation}%
Finally, we infer from (\ref{b13}) and (\ref{b12}) that

\begin{equation}
\lambda _{1}\left( M\right) -K\leq \frac{c\Lambda \left( \Lambda +A\right) }{%
\ln \left( t_{0}\varepsilon ^{-1}\right) }.  \label{b14}
\end{equation}%
Note that $c$ is a universal constant and that both $A$ and $\Lambda $ are
independent of $\varepsilon.$ Taking $\varepsilon \rightarrow 0$ leads to a
contradiction. Therefore, we have

\begin{equation*}
\lambda _{1}\left( M\right) \leq K.
\end{equation*}
\end{proof}

We are now ready to prove Theorem \ref{T2}. The only thing left to do is to
verify that both (\ref{g1}) and (\ref{g2}) hold.

\begin{theorem}
\label{Main2} Let $\left( M,g\right) $ be a three-dimensional complete
noncompact Riemannian manifold with scalar curvature $S\geq -6K$ on $M$ for
some nonnegative constant $K.$ Suppose that $M$ has finitely many ends and
its first Betti number $b_{1}(M)<\infty .$ Moreover, the Ricci curvature of $%
M$ is bounded below and the volume $V_{x}(1)$ of unit ball $B_{x}(1)$
satisfies

\begin{equation*}
V_x(1)\geq C(\epsilon)\,\exp\left(-2\sqrt{K+\epsilon}\,r(x)\right)
\end{equation*}%
for every $\epsilon>0$ and all $x\in M,$ where $r(x)$ is the geodesic
distance from $x$ to a fixed point $p.$ Then the bottom spectrum of the
Laplacian satisfies 
\begin{equation*}
\lambda _{1}\left( M\right) \leq K.
\end{equation*}
\end{theorem}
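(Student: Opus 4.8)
The plan is to reduce Theorem \ref{Main2} to Lemma \ref{Main}, whose conclusion $\lambda_1(M)\leq K$ is exactly what we want. Lemma \ref{Main} is proved under the hypotheses that $M$ is three-dimensional, complete, noncompact, has Ricci curvature bounded from below, scalar curvature $S\geq -6K$, and admits a positive Green's function $G$ satisfying the two structural conditions (\ref{g1}) and (\ref{g2}). All of the curvature and dimension hypotheses are assumed directly in Theorem \ref{Main2}, so the only remaining task, as the text preceding the statement indicates, is to verify that (\ref{g1}) and (\ref{g2}) actually hold under the volume lower bound $V_x(1)\geq C(\epsilon)\exp(-2\sqrt{K+\epsilon}\,r(x))$.

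First I would establish nonparabolicity together with the decay (\ref{g1}), namely $\lim_{x\to\infty}G(x)=0$. The natural route is to exploit the volume lower bound. The hypothesis $V_x(1)\geq C(\epsilon)\exp(-2\sqrt{K+\epsilon}\,r(x))$ gives a lower bound on volume growth that is essentially exponential of rate $2\sqrt{K}$, which I would use to show the manifold is nonparabolic and, more precisely, to control the Green's function at infinity. The key input should be a comparison between the bottom spectrum and the volume growth rate: by the standard Brooks-type inequality, a lower bound on the volume entropy controls $\lambda_1(M)$, but here the more relevant direction is that the prescribed volume lower bound forces enough volume so that the minimal positive Green's function is integrable at infinity and decays to zero. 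Concretely, I expect one argues that the $G$-capacity estimate, combined with a decomposition of $M\setminus B(p,R_0)$ into annuli and the volume lower bound on each, yields $\int_{M\setminus B(p,R)}|\nabla G|^2\to 0$ and hence $G\to 0$ along each end. The finitely-many-ends hypothesis ensures this is uniform across all ends, giving (\ref{g1}).

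Next I would verify (\ref{g2}), the bound $\#\mathrm{Conn}(l(t))\leq A$ on the number of connected components of the level sets. Here Lemma \ref{LT} does essentially all the work: it is stated precisely for a complete noncompact manifold with $k$ ends and finite first Betti number that is nonparabolic with $\lim_{x\to\infty}G(x)=0$, and it concludes that $l(t)$ has exactly $k$ components for all $t\leq t_0$ for some $t_0>0$. Since Theorem \ref{Main2} assumes finitely many ends, say $k$ ends, and $b_1(M)<\infty$, and since the previous step establishes nonparabolicity and (\ref{g1}), Lemma \ref{LT} applies and furnishes both the constant $t_0$ and the bound $A=k$ in (\ref{g2}).

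With (\ref{g1}) and (\ref{g2}) in hand, I would invoke Lemma \ref{Main} directly to conclude $\lambda_1(M)\leq K$, completing the proof. The main obstacle I anticipate is the first step: translating the rather delicate volume lower bound, which is tuned precisely to the exponential rate $2\sqrt{K+\epsilon}$, into nonparabolicity and the decay of $G$ at infinity. The exponent $2\sqrt{K}$ is exactly the threshold rate associated with bottom spectrum $K$ in the three-dimensional setting, so the estimate must be carried out sharply and uniformly in $\epsilon$, letting $\epsilon\to 0$ at the end; a crude volume-comparison argument will not suffice, and one must keep careful track of how the capacity of the complement of large balls is controlled by the assumed volume growth.
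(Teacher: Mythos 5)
Your reduction to Lemma \ref{Main} via Lemma \ref{LT} is the right skeleton, and your treatment of (\ref{g2}) matches the paper exactly. However, your first step --- deriving nonparabolicity and the decay (\ref{g1}) directly from the volume lower bound via capacity estimates on annuli --- is not viable, and this is where the real content of the proof lies. A lower bound on $V_x(1)$ carries no information about nonparabolicity: the flat manifold $\mathbb{R}^2\times S^1$ satisfies every hypothesis of the theorem with $K=0$ (unit balls have volume bounded below by a universal constant, one end, $b_1=1$, Ricci $\equiv 0$), yet it is parabolic and admits no positive Green's function at all. So no argument based on the stated hypotheses alone can produce a Green's function tending to zero; the theorem holds for such manifolds only because $\lambda_1=0\leq K$ there. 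The paper handles this by arguing by contradiction, and that contradiction hypothesis is what your proposal is missing.

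Concretely: assume $\lambda_1(M)>K\geq 0$. Then $\lambda_1(M)>0$, which already forces $M$ to be nonparabolic. Fix $\epsilon>0$ with $\lambda_1(M)\geq K+2\epsilon$. The decay of $G$ then comes from the spectrum, not from volume, via the Li--Wang estimate
\begin{equation*}
\int_{B_p(R+1)\setminus B_p(R-1)}G^{2}\leq C\,e^{-2\sqrt{\lambda_1(M)}\,R},
\end{equation*}
which is converted to a pointwise bound by the mean value inequality $G^{2}(x)\leq \frac{C}{V_x(1)}\int_{B_x(1)}G^{2}$, valid because the Ricci curvature is bounded below. The volume hypothesis enters only at this last step, to control the factor $V_x(1)^{-1}\leq C(\epsilon)^{-1}e^{2\sqrt{K+\epsilon}\,r(x)}$; since $\sqrt{K+2\epsilon}>\sqrt{K+\epsilon}$, the product still decays and $G\to 0$. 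Note that $\epsilon$ is held fixed throughout --- one does not let $\epsilon\to 0$ as you suggest --- and the strict gap between the two exponential rates is precisely what the contradiction hypothesis buys. With (\ref{g1}) and (\ref{g2}) established, Lemma \ref{Main} yields $\lambda_1(M)\leq K$, contradicting the assumption. Without reorganizing your argument around this contradiction and the spectral decay estimate, the first step of your proposal cannot be completed.
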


\begin{proof}
We prove the result by contradiction. Suppose that $\lambda _{1}\left(
M\right) >K.$ Then $M$ is necessarily nonparabolic as $\lambda _{1}(M)>0$,
see \cite[Chapter 22]{L}. Let $G$ be the minimal positive Green's function.
We first show that $\lim_{x\rightarrow \infty }G(x)=0$ or (\ref{g1}) holds.

According to a result of Li and the second author \cite{LW},

\begin{equation}
\int_{B_p\left(R+1\right) \backslash B_p\left(R-1\right) }G^{2}\leq C\,e^{-2%
\sqrt{\lambda _{1}\, \left( M\right) }R}\int_{B_p\left(2\right) \backslash
B_p\left(1\right) }G^{2}  \label{e1}
\end{equation}%
for all $R>4,$ where $C>0$ is a constant depending only on $%
\lambda_{1}\left( M\right).$ Since the Ricci curvature is bounded from
below, by (\ref{CY}) the mean value inequality holds for function $G,$ that
is, for any $x\in M\setminus B_p(2),$

\begin{equation}
G^{2}\left( x\right) \leq \frac{C}{V_x\left(1\right)}\,\int_{B\left(
x,1\right) }G^{2}  \label{e2}
\end{equation}
for some constant $C$ only depending on the Ricci curvature lower bound. Now
fix $\epsilon>0$ such that $\lambda_1(M)\geq K+2\epsilon.$ Combining (\ref%
{e1}) and (\ref{e2}), we conclude that

\begin{equation}
G^{2}\left( x\right) \leq \frac{C}{V_x\left(1\right)}\, e^{-2\sqrt{%
K+2\epsilon}\,r(x)}\,\int_{B_p\left(2\right) \backslash B_p\left(1\right)
}G^{2} ,  \label{e3}
\end{equation}%
where $r(x)$ is the geodesic distance from $x$ to point $p.$ Using the
assumption that

\begin{equation*}
V_x(1)\geq C(\epsilon)\,\exp\left(-2\sqrt{K+\epsilon}\,r(x)\right),
\end{equation*}%
one sees immediately from (\ref{e3}) that $\lim_{x\to \infty}G(x)=0.$

The fact that $G$ satisfies (\ref{g2}) follows directly from Lemma \ref{LT}.
Therefore, Lemma \ref{Main} is applicable and $\lambda_1(M)\leq K.$
\end{proof}

We now draw a corollary.

\begin{corollary}
Let $\left( M,g\right) $ be a three-dimensional complete noncompact
Riemannian manifold with nonpositive sectional curvature. Assume that the
scalar curvature $S$ is bounded by

\begin{equation*}
S\geq -6K\text{ \ on }M
\end{equation*}
for some nonnegative constant $K.$ Then 
\begin{equation*}
\lambda _{1}\left( M\right) \leq K.
\end{equation*}
\end{corollary}

\begin{proof}
This is because Theorem \ref{Main2} is applicable to the universal cover $%
\tilde{M}$ of $M$ as $\tilde{M}$ is a Cartan-Hadamard manifold with bounded
curvature. Indeed, being diffeomorphic to $\mathbb{R}^3,$ $\tilde{M}$ has
first Betti number $0$ and exactly one end. Also, the volume $V_x(1)$ of
unit balls $B_x(1)$ is at least that of the unit ball in $\mathbb{R}^3$ by
the volume comparison theorem. Therefore, $\lambda_1(\tilde{M})\leq K.$
However, the bottom spectrum satisfies $\lambda_1(M)\leq \lambda_1(\tilde{M}%
),$ the corollary follows.
\end{proof}

\textbf{Acknowledgment}. The first author was partially supported by NSF
grant DMS-1811845.

\end{document}